\theoremstyle{plain}
\newtheorem*{claim*}{Claim}
\newtheorem{thm}{Theorem}[section]
\newtheorem{corollary}[thm]{Corollary}
\newtheorem{lemma}[thm]{Lemma}
\newtheorem{prop}[thm]{Proposition}
\theoremstyle{definition}
\newtheorem{defn}[thm]{Definition}
\newtheorem{ex}[thm]{Example}
\newtheorem{remark}[thm]{Remark}
\newtheorem{con}[thm]{Construction}
\begin{document}
\subjclass[2010]{20M30, 20M05}
\title{\large{An introduction to presentations of monoid acts: quotients and subacts}}
\author{Craig Miller and Nik Ru{\v s}kuc}
\address{School of Mathematics and Statistics, St Andrews, Scotland, UK, KY16 9SS}
\email{cm380@st-andrews.ac.uk, nik.ruskuc@st-andrews.ac.uk}

\begin{abstract}
The purpose of this paper is to introduce the theory of presentations of monoids acts.
We aim to construct `nice' general presentations for various act constructions pertaining to subacts and Rees quotients.
More precisely, given an $M$-act $A$ and a subact $B$ of $A$, on the one hand we construct presentations for $B$ and the Rees quotient $A/B$ using a presentation for $A$,
and on the other hand we derive a presentation for $A$ from presentations for $B$ and $A/B$.
We also construct a general presentation for the union of two subacts.
From our general presentations, we deduce a number of finite presentability results.
Finally, we consider the case where a subact $B$ has finite complement in an $M$-act $A$.
We show that if $M$ is a finitely generated monoid and $B$ is finitely presented, then $A$ is finitely presented.
We also show that if $M$ belongs to a wide class of monoids, including all finitely presented monoids, then the converse also holds.
\end{abstract}

\maketitle

\section{\large{Introduction}\nopunct}

The concept of presentations is significant within many areas of algebra.
Finite presentability of acts was first studied by P. Normak in 1977 \cite{Normak}, and is a fundamental finiteness condition for the theory of monoid acts (see \cite{Kilp}).
The related notion of {\em coherency} for monoids was introduced by V. Gould in 1992 \cite{Gould1},
and has since been intensively studied by several authors (see \cite{Gould2}, \cite{Gould3}).
Finite presentability of acts also plays a key role in the monoid properties of being {\em right Noetherian} \cite{Normak} and being {\em completely right pure} \cite{Gould}.
However, there has not yet been developed a systematic theory of presentations for acts over monoids.
This paper is concerned with introducing such a theory through considering presentations for two of the most basic constructions: quotients and subacts.
A follow-on article will deal with various product constructions of acts.\par
The paper is structured as follows.
In Section 2, we collect some basic definitions and facts about acts.
In Section 3, we introduce the notions of presentations and finite presentability for a monoid act, provide various examples of act presentations, 
and record several results which will be of vital importance in the rest of the paper.
In the remainder of the paper, we study presentations for various constructions.
Typically we first obtain a general presentation for a construction and then derive corollaries regarding finite presentability.
Section 4 is concerned with presentations of Rees quotients.
Before moving to presentations of subacts in general in Section 6, we first discuss presentations of unions of subacts in Section 5.
Section 6 splits into two parts;
in the first part we construct a general presentation for a subact, and in the second part we study a specific case where the subact has finite complement.

\section{\large{Preliminaries}\nopunct}

The theory of monoid acts is essentially the theory of representations of monoids by transformations.
A monoid is a semigroup with an identity.
One of the most common and universal ways of defining monoids is by means of presentations, and we briefly review the basics here.
We refer the reader to \cite[Section 1.6]{Howie} for a more systematic introduction, and to \cite{Ruskuc1} for a more detailed development.\par
Let $Z$ be an alphabet.  We denote by $Z^{\ast}$ the monoid of all words in $Z$.
A {\em monoid presentation} is a pair $\langle Z\,|\,P\rangle_{\text{Mon}}$, where $P\subseteq Z^{\ast}\times Z^{\ast}$.\par
A monoid $M$ is said to be {\em defined} by the monoid presentation $\langle Z\,|\,P\rangle_{\text{Mon}}$ if $M\cong Z^{\ast}/\rho$, where $\rho$ is the smallest congruence on $Z^{\ast}$ containing $P$.
Thus we can identify $M$ with $Z^{\ast}/\rho$, so that the elements of $M$ are the $\rho$-classes of words from $Z^{\ast}$.
To put it differently, each word $w\in Z^{\ast}$ {\em represents} an element of $M$.\par
Let $u, v\in Z^{\ast}$.  We say that $v$ is obtained from $u$ by an {application of a relation from} $P$ if $u=pqr$ and $v=pq^{\prime}r$,
where $p, r\in Z^{\ast}$ and $(q, q^{\prime})\in P$.
We say that $u=v$ is a {\em consequence} of $P$ if either $u$ and $v$ are identical words or if there exists a sequence
$u=w_1, w_2, \dots, w_k=v$ where each $w_{i+1}$ is obtained from $w_i$ by an application of a relation from $R$.
We have the following basic fact:
\begin{lemma}
Let $M$ be a monoid defined by a presentation $\langle Z\,|\,P\rangle_{\emph{Mon}}$, and let $u, v\in Z^{\ast}$.
Then $u=v$ holds in $M$ if and only $u=v$ is a consequence of $P$.
\end{lemma}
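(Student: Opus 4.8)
The plan is to show that the binary relation $\sigma$ on $Z^{\ast}$ defined by ``$u\mathbin{\sigma}v$ if and only if $u=v$ is a consequence of $P$'' is precisely the congruence $\rho$; the lemma then follows at once, since under the identification $M\cong Z^{\ast}/\rho$ the equation $u=v$ holds in $M$ if and only if $u\rho=v\rho$, i.e.\ if and only if $(u,v)\in\rho$.

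First I would verify that $\sigma$ is a congruence on $Z^{\ast}$. Reflexivity is immediate (identical words are, by convention, consequences of $P$); transitivity follows by concatenating two witnessing sequences; and symmetry follows by reversing a witnessing sequence, reading an ``application of a relation'' as permitting the substitution of $q$ by $q'$ or of $q'$ by $q$ whenever $(q,q')\in P$. For compatibility with multiplication, note that if $u=w_1,\dots,w_k=v$ is a witnessing sequence for $u\mathbin{\sigma}v$ and $p',r'\in Z^{\ast}$, then $p'w_1r',\dots,p'w_kr'$ is a witnessing sequence for $p'ur'\mathbin{\sigma}p'vr'$: an elementary step $w_i=pqr$, $w_{i+1}=pq'r$ with $(q,q')\in P$ becomes the elementary step from $(p'p)\,q\,(rr')$ to $(p'p)\,q'\,(rr')$.

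Next, $P\subseteq\sigma$, since each pair $(q,q')\in P$ is itself an elementary step (with $p,r$ the empty word). As $\rho$ is the least congruence on $Z^{\ast}$ containing $P$, it follows that $\rho\subseteq\sigma$. Conversely, if $u\mathbin{\sigma}v$ with witnessing sequence $u=w_1,\dots,w_k=v$, then for each $i$ we have $w_i=pqr$ and $w_{i+1}=pq'r$ with $(q,q')\in P\subseteq\rho$; since $\rho$ is a congruence, $(w_i,w_{i+1})\in\rho$, and transitivity yields $(u,v)\in\rho$. Hence $\sigma=\rho$, as required. The argument is entirely routine; the only point needing a little care is making explicit that ``being a consequence of $P$'' is a symmetric relation (equivalently, that elementary substitutions may be applied in either direction) and that the elementary rewriting relation respects left and right multiplication.
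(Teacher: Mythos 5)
Your proof is correct and is the standard argument; the paper states this lemma without proof, as a well-known basic fact about monoid presentations. Note that the ``$R$'' in the statement is a typo for ``$P$'', which you have implicitly (and correctly) read it as.
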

We now proceed to present the basic concepts of the theory of monoid acts.
For more details, see \cite[Section 1.4]{Kilp}.\par 
Let $M$ be a monoid with identity 1. A {\em (right) $M$-act} is a non-empty set $A$ together with a map 
$$A\times M\to A, (a, m)\mapsto am$$
such that $a(mn)=(am)n$ and $a1=a$ for all $a\in A$ and $m, n\in M$.
For instance, $M$ itself is an $M$-act via right multiplication.\par
A subset $B$ of an $M$-act $A$ is a {\em subact} of $A$ if $bm\in B$ for all $b \in B$, $m\in M$.
Note that the right ideals of $M$ are precisely the subacts of the $M$-act $M$.\par
A subset $U$ of an $M$-act $A$ is a {\em generating set} for $A$ if for any $a\in A$, there exist $u\in U, m \in M$ such that $a=um$.
We write $A=\langle U\rangle$ if $U$ is a generating set for $A$.
An $M$-act $A$ is said to be {\em finitely generated} (resp. {\em cyclic}) if it has a finite (resp. one-element) generating set.\par
Note that a right ideal of $M$ can be generated by a set {\em as an $M$-act} or {\em as a semigoup}.
We introduce the convention that `generate' will always mean `generate as an $M$-act'.\par
For $M$-acts $A$ and $B$, a map $\theta : A \to B$ is an {\em $M$-homomorphism} if $(am)\theta=(a\theta)m$ for all $a\in A, m \in M$. 
If $\theta$ is also bijective, then it is an {\em $M$-isomorphism}, and we write $A\cong B$.\par
An equivalence relation $\rho$ on $A$ is an {\em ($M$-act) congruence} on $A$ if $(a, b)\in\rho$ implies $(am, bm)\in\rho$ for all $a, b\in A$ and $m\in M$.
For a congruence $\rho$ on an $M$-act $A$, the quotient set $A/\rho=\{[a] : a\in A\}$ becomes an $M$-act by defining $[a]m=[am]$.\par
Given an $M$-act $A$ and a subact $B$ of $A$, we define the {\em Rees congruence} $\rho_B$ on $A$ by 
$$a\rho_B b \iff a=b \text{ or } a, b\in B$$
for all $a, b\in A$.  We denote the quotient act $A/{\rho_B}$ by $A/B$ and call it the {\em Rees quotient} of $A$ by $B$.\par
For an $M$-act $A$ and $X\subseteq A\times A$, we denote by $\langle X\rangle_{\text{cg}}$ the smallest congruence on $A$ containing $X$.
A congruence $\rho$ on an $M$-act $A$ is {\em finitely generated} if there exists a finite subset $X\subseteq A\times A$ such that $\rho=\langle X\rangle_{\text{cg}}$.\par
Let $A$ be an $M$-act and let $X\subseteq A\times A$.  We introduce the notation
$$\overline{X}=X\cup\{(u, v) : (v, u)\in X\},$$
which will be used throughout the paper.
For $a, b\in A$, an $X${\em -sequence connecting} $a$ and $b$ is any sequence
$$a=p_1m_1, \; q_1m_1=p_2m_2, \; q_2m_2=p_3m_3, \; \dots, \; q_km_k=b,$$
where $(p_i, q_i)\in \overline{X}$ and $m_i\in M$ for $1\leq i\leq k$.\par
We now provide the following useful lemma (see \cite[Section 1.4]{Kilp} for a proof):
\begin{lemma}
Let $M$ be a monoid, let $A$ be an $M$-act, let $X\subseteq A\times A$ and let $a, b\in A$.
Then $(a, b)\in\langle X\rangle_{\text{cg}}$ if and only if either $a=b$ or there exists an $X$-sequence connecting $a$ and $b$.
\end{lemma}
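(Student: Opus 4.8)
The plan is to exhibit the relation on the right-hand side explicitly and identify it with $\langle X\rangle_{\text{cg}}$. Write
$$\sigma=\{(a,b)\in A\times A: a=b\ \text{or there is an }X\text{-sequence connecting }a\text{ and }b\}.$$
The proof then splits into two halves: first, that $\sigma$ is a congruence on $A$ with $X\subseteq\sigma$; and second, that $\sigma\subseteq\tau$ for every congruence $\tau$ on $A$ satisfying $X\subseteq\tau$. Since by definition $\langle X\rangle_{\text{cg}}$ is the smallest congruence on $A$ containing $X$, these two facts together force $\sigma=\langle X\rangle_{\text{cg}}$, which is precisely the assertion of the lemma.

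For the first half, $X\subseteq\sigma$ is immediate: given $(p,q)\in X$, the one-step sequence $p=p\cdot 1$, $q\cdot 1=q$ (i.e.\ $k=1$, $m_1=1$, $(p_1,q_1)=(p,q)$) connects $p$ and $q$. Reflexivity of $\sigma$ holds by the clause $a=b$. For symmetry, note that $\overline{X}$ is symmetric by construction, so reversing an $X$-sequence connecting $a$ and $b$ — reading the identities from right to left and replacing each pair $(p_i,q_i)$ by $(q_i,p_i)\in\overline{X}$ — yields an $X$-sequence connecting $b$ and $a$; the case $a=b$ is trivial. Transitivity follows by concatenation: an $X$-sequence from $a$ to $b$ followed by one from $b$ to $c$ is an $X$-sequence from $a$ to $c$ once the shared boundary identity is matched up, and the cases $a=b$ or $b=c$ are handled directly. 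Finally, $\sigma$ is a congruence: if $a=p_1m_1$, $q_1m_1=p_2m_2$, \dots, $q_km_k=b$ is an $X$-sequence and $m\in M$, then right-multiplying every identity by $m$ and using associativity of the action gives $am=p_1(m_1m)$, $q_1(m_1m)=p_2(m_2m)$, \dots, $q_k(m_km)=bm$, an $X$-sequence connecting $am$ and $bm$; and if $a=b$ then $am=bm$.

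For the second half, let $\tau$ be any congruence on $A$ with $X\subseteq\tau$, and let $(a,b)\in\sigma$. If $a=b$ then $(a,b)\in\tau$ since $\tau$ is reflexive. Otherwise fix an $X$-sequence $a=p_1m_1$, $q_1m_1=p_2m_2$, \dots, $q_km_k=b$. As $\tau$ is symmetric and contains $X$, it contains $\overline{X}$, so $(p_i,q_i)\in\tau$ for each $i$; and as $\tau$ is a congruence, $(p_im_i,q_im_i)\in\tau$ for each $i$. Reading along the sequence,
$$a=p_1m_1\mathrel{\tau}q_1m_1=p_2m_2\mathrel{\tau}q_2m_2=\dots=q_km_k=b,$$
so $(a,b)\in\tau$ by transitivity. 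Hence $\sigma\subseteq\tau$, which completes the proof.

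The argument is entirely routine; there is no genuine obstacle beyond careful bookkeeping, the two points deserving a little attention being the symmetry of $\overline{X}$ (which is exactly what makes $X$-sequences reversible, hence $\sigma$ symmetric) and the explicit treatment of the degenerate cases $a=b$ (and $b=c$), which are what guarantee that $\sigma$ is an equivalence relation and not merely reflexive. This recovers the statement recorded in \cite[Section 1.4]{Kilp}.
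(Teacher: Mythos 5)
Your proof is correct and is exactly the standard argument: the paper itself omits the proof of this lemma, deferring to \cite[Section 1.4]{Kilp}, and the argument given there proceeds in the same way, by showing that the relation consisting of equal pairs together with pairs joined by an $X$-sequence is itself a congruence containing $X$ and is contained in every such congruence. All the details you supply (the role of the symmetry of $\overline{X}$, concatenation for transitivity, right-translation of a sequence by $m\in M$, and the minimality half) check out.
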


A generating set $U$ for an $M$-act $A$ is a {\em basis} of $A$ if for any $a\in A$, there exist unique $u\in U$ and $m\in M$ such that $a=um$. 
An $M$-act $A$ is said to be {\em free} if it has a basis.
We have the following structure theorem for free acts.\par

\begin{prop}[{\cite[Theorem 1.5.13]{Kilp}}]
An $M$-act $A$ is free if and only if it is isomorphic to a disjoint union of $M$-acts all of which are $M$-isomorphic to $M$.
\end{prop}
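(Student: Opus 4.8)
The plan is to prove both implications directly from the definition of a basis, the key tool being, for an element $u$ of the act, the $M$-homomorphism $\phi_u\colon M\to A$ given by $m\mapsto um$.

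For the forward direction, suppose $A$ is free with basis $U$. For each $u\in U$ set $uM=\{um:m\in M\}$, which is a subact of $A$, and consider the map $\phi_u\colon M\to uM$, $m\mapsto um$. I would check that $\phi_u$ is an $M$-homomorphism (immediate from $u(mn)=(um)n$), that it is surjective by the definition of $uM$, and that it is injective: if $um=um'$, then the element $um$ has two expressions $(u,m)$ and $(u,m')$ as a basis element acting on a monoid element, which the uniqueness clause in the definition of a basis forbids. Hence $uM\cong M$ for every $u\in U$. Since $U$ generates $A$, we have $A=\bigcup_{u\in U}uM$, and this union is disjoint, for if $um=u'm'$ with $u,u'\in U$ then uniqueness forces $u=u'$. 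Thus $A$ is the disjoint union of the subacts $uM$ ($u\in U$), each isomorphic to $M$, which is to say $A\cong\bigsqcup_{u\in U}M$.

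For the converse, suppose $A\cong\bigsqcup_{i\in I}A_i$ with each $A_i\cong M$; transporting the act structure along the isomorphism, we may assume $A=\bigsqcup_{i\in I}A_i$. Fix $M$-isomorphisms $\psi_i\colon M\to A_i$ and put $u_i=1\psi_i$. Since $\psi_i$ is an $M$-homomorphism, $m\psi_i=(1\psi_i)m=u_im$ for all $m\in M$, so $A_i=\{u_im:m\in M\}$ and, as $\psi_i$ is injective, each element of $A_i$ equals $u_im$ for a unique $m$. Let $U=\{u_i:i\in I\}$; since $u_i\in A_i$ and the $A_i$ are pairwise disjoint, the $u_i$ are distinct and $|U|=|I|$. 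Then $U$ generates $A$, as any $a\in A$ lies in some $A_i$, and the expression is unique, since $u_im=u_jm'$ forces $i=j$ by disjointness and then $m=m'$ by injectivity of $\psi_i$. Hence $U$ is a basis and $A$ is free.

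I anticipate no serious obstacle: the proof is essentially bookkeeping. The two points that warrant a little care are the distinction between the external disjoint union (coproduct) of $M$-acts named in the statement and the internal decomposition of $A$ into pairwise disjoint subacts produced in the forward direction, and the fact that injectivity of $m\mapsto um$ is precisely a restatement of the uniqueness clause in the definition of a basis. Since $M$-acts are non-empty by convention, a free act has a non-empty basis and the index set $I$ above is non-empty, so no degenerate cases arise.
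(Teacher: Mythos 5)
The paper does not prove this statement; it is quoted directly from Kilp, Knauer and Mikhalev \cite[Theorem 1.5.13]{Kilp}, so there is no in-paper argument to compare against. Your proof is correct and is essentially the standard textbook argument: the uniqueness clause in the definition of a basis gives both the injectivity of $m\mapsto um$ and the disjointness of the orbits $uM$, and conversely the images of $1$ under the isomorphisms $M\to A_i$ form a basis. No gaps.
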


This leads to the following explicit construction of a free act.

\begin{con}[{\cite[Construction 1.5.4]{Kilp}}]
Let $M$ be a monoid, let $X$ be a non-empty set, and consider the set $X\times M$.  
With the operation $$(x, m)n = (x, mn)$$ for all $(x, m)\in X\times M$ and $n\in M$, the set $X\times M$ is a free $M$-act with basis $X\times\{1\}$.
We denote this $M$-act by $F_{X, M}$, although we will usually just write $F_X$.
We will also usually write $x\cdot m$ for $(x, m)$ and $x$ for $(x, 1)$.
\end{con}

\begin{prop}[{\cite[Theorem 1.5.15]{Kilp}}]
Let $A$ be an $M$-act and let $F$ be a free $M$-act with basis $X$. 
If $\phi$ is any map from $X$ to $A$, then there exists a unique $M$-homomorphism $\theta : F\to A$ such that $\theta|_X=\phi$.
Further, if $X\phi$ is a generating set for $A$, then $\theta$ is surjective.
\end{prop}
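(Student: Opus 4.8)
The plan is to exploit the defining property of a basis: every element of $F$ has a \emph{unique} expression of the form $x\cdot m$ with $x\in X$ and $m\in M$. First I would use this to define $\theta:F\to A$ by the rule $\theta(x\cdot m)=(x\phi)m$; the uniqueness of the representation is precisely what guarantees that this prescription yields a well-defined map. The restriction condition is then immediate, since $x=x\cdot 1$ gives $\theta(x)=(x\phi)1=x\phi$, so $\theta|_X=\phi$.

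Next I would verify that $\theta$ is an $M$-homomorphism. Given $f\in F$ and $n\in M$, write $f=x\cdot m$ in normal form; then the act axioms in $F$ give $fn=(x\cdot m)n=x\cdot(mn)$, which is again in normal form, so $\theta(fn)=(x\phi)(mn)=\big((x\phi)m\big)n=(\theta f)n$ by the act axioms in $A$. For uniqueness, if $\theta':F\to A$ is any $M$-homomorphism with $\theta'|_X=\phi$, then for $f=x\cdot m$ we obtain $\theta'(f)=\theta'(x)m=(x\phi)m=\theta(f)$, whence $\theta'=\theta$. Finally, assuming $X\phi$ generates $A$, any $a\in A$ can be written $a=(x\phi)m$ for some $x\in X$, $m\in M$, and then $a=\theta(x\cdot m)\in\theta(F)$, so $\theta$ is surjective.

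The argument is entirely routine, and the only step that genuinely requires attention — and which I would flag as the main (mild) obstacle — is the well-definedness of $\theta$: this rests squarely on the uniqueness clause in the definition of a basis, and indeed would fail for an arbitrary generating set in place of a basis. Everything else is a mechanical check of the act axioms.
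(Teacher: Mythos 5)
Your proof is correct and is exactly the standard argument behind the cited result (the paper itself gives no proof, deferring to Kilp--Knauer--Mikhalev): define $\theta$ on normal forms $x\cdot m$ via $\theta(x\cdot m)=(x\phi)m$, with well-definedness resting on the uniqueness clause of a basis, and the homomorphism property, uniqueness of $\theta$, and surjectivity all following mechanically. You correctly identify well-definedness as the one point where the basis hypothesis (rather than a mere generating set) is essential.
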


\begin{corollary}
For any $M$-act $A$, there exists a free $M$-act $F$ such that $A$ is a homomorphic image of $F$.
\end{corollary}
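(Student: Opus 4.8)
The plan is to apply the preceding proposition directly, the only extra ingredient being the trivial observation that every $M$-act possesses a generating set. First I would note that an $M$-act $A$ is by definition non-empty, and that $A$ itself is a generating set for $A$: for each $a\in A$ we have $a=a1$, so $A=\langle A\rangle$. (More generally one could take any generating set $U$ of $A$; choosing $U=A$ avoids having to produce one.)

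Next I would form the free $M$-act $F=F_{A,M}=A\times M$ with basis $A$, as in the above construction; this is legitimate since $A\neq\emptyset$. Let $\phi\colon A\to A$ be the identity map, viewed as a map from the basis $A$ of $F$ into the $M$-act $A$. By the previous proposition there is a (unique) $M$-homomorphism $\theta\colon F\to A$ with $\theta|_A=\phi$. Since $A\phi=A$ is a generating set for $A$, the same proposition guarantees that $\theta$ is surjective. Hence $A$ is a homomorphic image of the free $M$-act $F$, as required.

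I do not anticipate any real obstacle here: this is a routine consequence of the structure/universal-property results just established. The only points needing a word of care are checking that the hypotheses are met — namely that the index set for the free act is non-empty (clear, as $A\neq\emptyset$) and that the image of the basis generates $A$ (immediate, since that image is all of $A$) — and these are both instantaneous. If a smaller free act is desired, one simply repeats the argument with any generating set $U$ of $A$ in place of $A$ and with $\phi\colon U\to A$ the inclusion map.
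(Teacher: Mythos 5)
Your proof is correct and is exactly the argument the paper intends (the corollary is left unproved there as an immediate consequence of Proposition 2.5): take the free act on a generating set, extend the inclusion to an $M$-homomorphism, and note surjectivity. Nothing to add.
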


\section{\large{Presentations of monoid acts}\nopunct}

\noindent We now introduce presentations of monoid acts.  The reader may also consult \cite[Section 1.5]{Kilp}.\par
Let $M$ be a monoid.  An {\em ($M$-act) presentation} is a pair $\langle X\,|\,R\rangle$, where $X$ is a non-empty set and $R \subseteq F_X \times F_X$ is a relation on the free $M$-act $F_X$.
An element $x$ of $X$ is called a {\em generator}, while an element $(u, v)$ of $R$ is called a {\em (defining) relation}, and is usually written as $u=v$.\par
An $M$-act $A$ is said to be {\em defined by the presentation} $\langle X\,|\,R\rangle$ if $A$ is $M$-isomorphic to the quotient act $F_X/\rho$, where $\rho=\langle R\rangle_{\text{cg}}$ is the smallest congruence on $F_X$ containing $R$.\par
Let $A$ be an $M$-act and $\theta : A\to F_X/\rho$ an $M$-isomorphism, where $\rho=\langle R\rangle_{\text{cg}}$.
We say an element $w\in F_X$ {\em represents} an element $a\in A$ if $a\theta=[w]_{\rho}$.\par
In the context of presentations, we write $w_1\equiv w_2$ if $w_1$ and $w_2$ are equal in $F_X$, and $w_1=w_2$ if they represent the same element of $A$.

\begin{remark}
Let $A$ be an $M$-act and let $X$ be any generating set for $A$.  
By Proposition 2.5, there exists a surjective $M$-homomorphism $\theta : F_X \to A$, so we have that $A\cong F_X/\text{ker~}\theta$ by the First Isomorphism Theorem for $M$-acts.  
Therefore, $A$ is defined by the presentation $\langle X\,|\,R\rangle$ where $R$ is any relation which generates $\text{ker~}\theta$.  
Hence, every $M$-act can be defined by a presentation.
\end{remark}

\begin{defn}
Let $\langle X\,|\,R\rangle$ be a presentation and let $w_1, w_2 \in F_X$. 
We say that the relation $w_1=w_2$ is a {\em consequence} of $R$ if $w_1\equiv w_2$ or there is an $R$-sequence connecting $w_1$ and $w_2$.\par
We say that $w_2$ is obtained from $w_1$ by an {\em application of a relation from} $R$ if there exists an $R$-sequence with only two distinct terms connecting $w_1$ and $w_2$.
\end{defn}

The next lemma follows immediately from Lemma 2.2.

\begin{lemma}
Let $\langle X\,|\,R\rangle$ be a presentation, let $A$ be the $M$-act defined by $\langle X \mid R\rangle$, and let $w_1, w_2 \in F_X$.  
Then $w_1=w_2$ in $A$ if and only if $w_1=w_2$ is a consequence of $R$.
\end{lemma}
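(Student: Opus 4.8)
The plan is to deduce this lemma from Lemma 2.2 by transporting the statement from the world of act congruences into the world of monoid congruences, where Lemma 2.2 applies directly. First I would unwind the definitions: $A$ is defined by $\langle X\mid R\rangle$ means $A\cong F_X/\rho$ where $\rho=\langle R\rangle_{\mathrm{cg}}$, so $w_1=w_2$ in $A$ precisely means $(w_1,w_2)\in\rho$. By Lemma 2.3 (the characterisation of the generated congruence via $X$-sequences, applied with $X$ replaced by $R$ and $A$ replaced by $F_X$), this holds if and only if $w_1\equiv w_2$ or there is an $R$-sequence connecting $w_1$ and $w_2$. But "$w_1\equiv w_2$ or there is an $R$-sequence connecting $w_1$ and $w_2$" is exactly the definition (Definition 3.2) of $w_1=w_2$ being a consequence of $R$. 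So in fact the lemma is almost immediate from Lemma 2.3 rather than needing the full strength of Lemma 2.2; I would state this clearly.

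The reason the paper says it "follows immediately from Lemma 2.2" is presumably that one wants an analogue of the monoid picture, so an alternative (and perhaps intended) route is: realise $F_X$ concretely as the disjoint union $X\times M$ (Construction 2.7), fix a monoid presentation $\langle Z\mid P\rangle_{\mathrm{Mon}}$ for $M$, and encode each element $x\cdot m$ of $F_X$ as a word beginning with the letter $x\in X$ followed by a word $w\in Z^\ast$ representing $m$. Then an $R$-sequence connecting $w_1$ and $w_2$ corresponds, after translating the defining relations of $R$ and the relations $P$ of $M$ into relations on the alphabet $X\sqcup Z$, to a sequence of elementary transitions of the kind appearing in Lemma 2.2. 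However, this is heavier than necessary, so I would only sketch it as a remark and base the actual proof on Lemma 2.3.

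The key steps, in order, are: (1) note $w_1=w_2$ in $A$ iff $(w_1,w_2)\in\langle R\rangle_{\mathrm{cg}}$, using the definition of "defined by a presentation" and the fact that $M$-isomorphisms preserve and reflect equality; (2) apply Lemma 2.3 with the act taken to be $F_X$ and the relation set taken to be $R$, obtaining that this is equivalent to $w_1\equiv w_2$ or the existence of an $R$-sequence connecting $w_1$ and $w_2$; (3) observe that this disjunction is verbatim the definition of "$w_1=w_2$ is a consequence of $R$" from Definition 3.2, completing the proof.

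The only point requiring a little care — the "main obstacle", such as it is — is step (1): one must check that if $\theta\colon A\to F_X/\rho$ is the fixed $M$-isomorphism, then $w_1$ and $w_2$ represent the same element of $A$ (i.e. $[w_1]_\rho=[w_2]_\rho$) is genuinely the same condition as $(w_1,w_2)\in\rho$, which is immediate since $\theta$ is a bijection and the representation relation is defined by $a\theta=[w]_\rho$. Everything else is a direct substitution of definitions, so I expect the proof to be two or three sentences long.
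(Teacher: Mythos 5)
Your argument is correct and is exactly the paper's intended one: the paper offers no written proof, merely noting that the lemma ``follows immediately from Lemma 2.2'', and in the paper's numbering Lemma 2.2 \emph{is} the sequence characterisation of $\langle R\rangle_{\mathrm{cg}}$ that you invoke (you have shifted the numbering by one, calling it Lemma 2.3 and taking Lemma 2.2 to be the monoid-word lemma, which is actually Lemma 2.1). Consequently your second paragraph, sketching a heavier route through monoid presentations to explain the citation, is addressing a non-issue; the three-step unwinding of definitions in your first and third paragraphs is the whole proof and matches the paper.
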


Let $M$ be a monoid, let $A$ be an $M$-act generated by a set $Y$, and let $\phi : X\to Y$ be a surjective map.
Let $\theta : F_X\to A$ be the unique $M$-homomorphism extending $\phi$, and let $R$ be a subset of $F_X\times F_X$.
We say that $A$ {\em satisfies} $R$ (with respect to $\phi$) if for each $(u, v)\in R$, we have $u\theta=v\theta$; that is, $R\subseteq\text{ker~}\theta$.
Note that the $M$-act defined by a presentation $\langle X\,|\,R\rangle$ satisfies $R$.\par 
From the definition of an act defined by a presentation and Lemma 2.2, we have:

\begin{prop}
Let $M$ be a monoid, let $A$ be an $M$-act generated by a set $X$, and let $R\subseteq F_X\times F_X$.  Then $\langle X\,|\,R\rangle$ is a presentation for $A$ if and only if the following conditions hold:
\begin{enumerate}
 \item $A$ satisfies $R$;
 \item if $w_1, w_2 \in F_X$ such that $A$ satisfies $w_1=w_2$, then $w_1=w_2$ is a consequence of $R$.
\end{enumerate}
\end{prop}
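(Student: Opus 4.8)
The plan is to fix the canonical surjection of $F_X$ onto $A$ and to recognise conditions (1) and (2) as exactly the two inclusions that make its kernel equal to $\langle R\rangle_{\mathrm{cg}}$. Since $X$ generates $A$, Proposition 2.5 applied to the inclusion $X\hookrightarrow A$ (here $X\subseteq A$) produces a surjective $M$-homomorphism $\theta\colon F_X\to A$ restricting to the identity on $X$; this $\theta$ is precisely the one with respect to which ``$A$ satisfies'' is defined, with $\phi=\mathrm{id}_X$. By the First Isomorphism Theorem $A\cong F_X/\ker\theta$, and $\langle X\mid R\rangle$ is a presentation for $A$ (relative to the generating set $X$) if and only if $\ker\theta=\rho$, where $\rho:=\langle R\rangle_{\mathrm{cg}}$. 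So the statement reduces to showing: $\ker\theta=\rho$ if and only if (1) and (2) hold.

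For ``(1) and (2) $\Longrightarrow\ker\theta=\rho$'': by (1) we have $R\subseteq\ker\theta$, and since $\ker\theta$ is a congruence on $F_X$ and $\rho$ is the smallest congruence on $F_X$ containing $R$, this gives $\rho\subseteq\ker\theta$. For the reverse inclusion, take $(w_1,w_2)\in\ker\theta$, i.e.\ $w_1\theta=w_2\theta$; then $A$ satisfies $w_1=w_2$, so by (2) the relation $w_1=w_2$ is a consequence of $R$, and applying Lemma 2.2 to the $M$-act $F_X$ with the set $R\subseteq F_X\times F_X$ yields $(w_1,w_2)\in\langle R\rangle_{\mathrm{cg}}=\rho$. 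Hence $\ker\theta\subseteq\rho$, and therefore $\ker\theta=\rho$.

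For ``$\ker\theta=\rho\Longrightarrow$ (1) and (2)'': since $R\subseteq\langle R\rangle_{\mathrm{cg}}=\rho=\ker\theta$, the act $A$ satisfies $R$, which is (1). If $w_1,w_2\in F_X$ and $A$ satisfies $w_1=w_2$, then $(w_1,w_2)\in\ker\theta=\rho=\langle R\rangle_{\mathrm{cg}}$, so by Lemma 2.2 either $w_1\equiv w_2$ or there is an $R$-sequence connecting $w_1$ and $w_2$; in either case $w_1=w_2$ is a consequence of $R$ by Definition 3.2, which is (2). (Alternatively, once $\langle X\mid R\rangle$ is known to define $A$, condition (2) is immediate from Lemma 3.3.)

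There is no real obstacle here: the argument is a matter of unwinding the definitions of ``presentation for $A$'', ``$A$ satisfies $R$'' and ``consequence of $R$'', and invoking Lemma 2.2, which converts membership in $\langle R\rangle_{\mathrm{cg}}$ into the existence of an $R$-sequence. The only point deserving a moment's care is the opening reduction: one should be explicit that the presentation is taken with respect to the given generating set $X$ of $A$, so that the relevant comparison is between $\rho$ and the kernel of the \emph{canonical} homomorphism $\theta$ extending $\mathrm{id}_X$, and that for this same $\theta$ the assertion ``$A$ satisfies $w_1=w_2$'' is literally $(w_1,w_2)\in\ker\theta$. Once that is pinned down, (1) and (2) are visibly the two inclusions $\rho\subseteq\ker\theta$ and $\ker\theta\subseteq\rho$.
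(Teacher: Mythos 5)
Your proposal is correct and follows exactly the route the paper intends: the paper gives no explicit proof, stating only that the result follows ``from the definition of an act defined by a presentation and Lemma 2.2'', and your argument is precisely the unwinding of that remark, identifying conditions (1) and (2) with the two inclusions $\rho\subseteq\ker\theta$ and $\ker\theta\subseteq\rho$ for the canonical surjection $\theta$. Your parenthetical care about reading ``presentation for $A$'' relative to the given generating set $X$ (so that the isomorphism $A\cong F_X/\rho$ is the one induced by $x\mapsto[x]_\rho$) is exactly the right point to flag.
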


The next fact follows from Proposition 2.5 and the Third Isomorphism Theorem for acts.

\begin{prop}
Let $A$ be an $M$-act defined by a presentation $\langle X\,|\,R\rangle$, let $B$ be an $M$-act, and let $\phi : X \to B$ be a map onto a generating set for $B$.
If $B$ satisfies $R$ (with respect to $\phi$), then there exists a surjective $M$-homomorphism $\psi : A \to B$. 
\end{prop}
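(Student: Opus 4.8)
The plan is to produce $\psi$ by first constructing a surjective $M$-homomorphism $F_X\to B$, then factoring it through the congruence $\rho=\langle R\rangle_{\mathrm{cg}}$, and finally composing with the isomorphism $A\cong F_X/\rho$ witnessing that $A$ is defined by $\langle X\mid R\rangle$.

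First I would apply Proposition 2.5 to the map $\phi:X\to B$: since $X\phi$ is a generating set for $B$, there is a (unique) $M$-homomorphism $\theta:F_X\to B$ extending $\phi$, and $\theta$ is surjective. The hypothesis that the generators $X\phi$ satisfy every relation from $R$ is, by the definition of ``satisfies $R$'' recorded just before the statement, exactly the assertion that $R\subseteq\ker\theta$.

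Next, $\ker\theta$ is a congruence on $F_X$ containing $R$, so by minimality of $\langle R\rangle_{\mathrm{cg}}$ we get $\rho=\langle R\rangle_{\mathrm{cg}}\subseteq\ker\theta$. By the Third Isomorphism Theorem for acts (equivalently, the universal property of the quotient), $\theta$ then factors through $F_X/\rho$: there is an $M$-homomorphism $\bar\theta:F_X/\rho\to B$ with $[w]_\rho\bar\theta=w\theta$ for all $w\in F_X$, and $\bar\theta$ is surjective because $\theta$ is. Composing $\bar\theta$ with the given isomorphism $A\to F_X/\rho$ yields the required surjective $M$-homomorphism $\psi:A\to B$.

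I do not expect any genuine obstacle: each step is a direct invocation of a result already in hand (Proposition 2.5 and the factor theorem for act congruences). The only point needing a moment's attention is the translation of the phrase ``the generators $X\phi$ satisfy all the relations from $R$'' into the inclusion $R\subseteq\ker\theta$, and hence $\rho\subseteq\ker\theta$; once that is in place the factorisation is automatic.
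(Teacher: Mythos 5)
Your proof is correct and follows exactly the route the paper indicates (the paper gives no details, merely noting that the result "follows from Proposition 2.5 and the Third Isomorphism Theorem for acts"); your argument is simply that sketch carried out in full. No further comment is needed.
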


\begin{defn}
A {\em finite presentation} is a presentation $\langle X\,|\,R\rangle$ where $X$ and $R$ are finite.  An $M$-act $A$ is {\em finitely presented} if it can be defined by a finite presentation.
\end{defn}

Note that a right ideal of a monoid $M$ may be finitely presented {\em as an $M$-act} or {\em as a semigoup}.
When we say that a right ideal is `finitely presented', we will always mean as an $M$-act.

\vspace{0.5em}
\begin{ex}~\par
\begin{enumerate}
 \item The free $M$-act $F_X$ is defined by the finite presentation $\langle X\,|\,~\rangle$.  
In particular, if $X$ is finite, then $F_X$ is finitely presented.
 \item For any monoid $M$, the $M$-act $M$ is finitely presented, since $M$ is a free $M$-act with basis $\{1\}$.
\end{enumerate}
\end{ex}

The following results are specialisations of well-known facts from general algebra.
They essentially reflect the fact that congruence-generation is an algebraic closure operator.
See, for instance, Section 1.5 and Theorem 2.5.5 in \cite{Burris} for more details.

\begin{prop}
Let $M$ be a monoid, let $A$ be an $M$-act defined by a finite presentation $\langle X\,|\,R\rangle$, and let $Y$ be another finite generating set for $A$.
Then $A$ can be defined by a finite presentation in terms of the generators $Y$.
\end{prop}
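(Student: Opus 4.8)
The plan is to run a change-of-generators argument in the spirit of Tietze transformations, mechanically rewriting the presentation over $X$ into one over $Y$. Let $\theta_X\colon F_X\to A$ be the canonical surjection onto $A\cong F_X/\langle R\rangle_{\text{cg}}$, and let $\theta_Y\colon F_Y\to A$ be the surjective $M$-homomorphism extending the inclusion $Y\hookrightarrow A$, which exists by Proposition~2.5 since $Y$ generates $A$. For each $y\in Y$ fix a word $w_y\in F_X$ with $w_y\theta_X=y$, and for each $x\in X$ fix a word $v_x\in F_Y$ with $v_x\theta_Y=x\theta_X$; these exist because $\theta_X$ and $\theta_Y$ are surjective, and as $X$ and $Y$ are finite only finitely many are needed. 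Let $\Phi\colon F_X\to F_Y$ and $\Psi\colon F_Y\to F_X$ be the $M$-homomorphisms extending $x\mapsto v_x$ and $y\mapsto w_y$ (Proposition~2.5). Since $\Phi\theta_Y$ and $\theta_X$ are $M$-homomorphisms $F_X\to A$ agreeing on $X$, and $\Psi\theta_X$ and $\theta_Y$ are $M$-homomorphisms $F_Y\to A$ agreeing on $Y$, the uniqueness clause of Proposition~2.5 gives the two identities $\Phi\theta_Y=\theta_X$ and $\Psi\theta_X=\theta_Y$, which are the backbone of everything below.

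I would then take as candidate the presentation $\langle Y\mid S\rangle$, where $S=S_1\cup S_2$ with
$$S_1=\{\,(y,\Phi(w_y)):y\in Y\,\},\qquad S_2=\{\,(\Phi(p),\Phi(q)):(p,q)\in R\,\}.$$
This is finite since $Y$ and $R$ are. To show it defines $A$ I would verify the two conditions of Proposition~3.4 (with respect to the map $\theta_Y$). Condition~(1), that $A$ satisfies $S$, is immediate from the two identities: $\Phi(w_y)\theta_Y=w_y\theta_X=y$, and for $(p,q)\in R$ we have $\Phi(p)\theta_Y=p\theta_X=q\theta_X=\Phi(q)\theta_Y$ because $R\subseteq\ker\theta_X$.

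For condition~(2) the crucial first step is the observation that \emph{for every $w\in F_Y$ the relation $w=\Phi(\Psi(w))$ is a consequence of $S_1$}. This uses the structure of $F_Y$: every element of $F_Y=Y\times M$ has the form $w=y\cdot m$, whence $\Psi(w)=w_y\cdot m$ and $\Phi(\Psi(w))=\Phi(w_y)\cdot m$, so $w=\Phi(\Psi(w))$ is obtained by a single application of the relation $(y,\Phi(w_y))\in S_1$. Now let $w_1,w_2\in F_Y$ with $w_1\theta_Y=w_2\theta_Y$. Applying $\Psi\theta_X=\theta_Y$ gives $\Psi(w_1)\theta_X=\Psi(w_2)\theta_X$, so $\Psi(w_1)=\Psi(w_2)$ holds in $A$, and hence by Lemma~3.3 it is a consequence of $R$; pushing the corresponding $R$-sequence through the $M$-homomorphism $\Phi$ yields an $S_2$-sequence, so $\Phi(\Psi(w_1))=\Phi(\Psi(w_2))$ is a consequence of $S_2$. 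Concatenating the three $S$-sequences witnessing $w_1=\Phi(\Psi(w_1))$, $\Phi(\Psi(w_1))=\Phi(\Psi(w_2))$ and $\Phi(\Psi(w_2))=w_2$ shows that $w_1=w_2$ is a consequence of $S$, establishing condition~(2).

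The main thing to be careful about — rather than any genuine obstacle — is the bookkeeping in that last step: verifying that applying an $M$-homomorphism term-by-term to an $R$-sequence really does produce an $S$-sequence (using that $\Phi$ respects the operation $\overline{(\,\cdot\,)}$ and commutes with the $M$-action), and that ``is a consequence of'' is transitive under concatenation of sequences. Everything else is immediate, and the resulting $\langle Y\mid S\rangle$ is then a finite presentation for $A$ in terms of the generators $Y$.
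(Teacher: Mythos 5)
Your argument is correct. Note, however, that the paper does not actually prove this proposition: it is stated as a specialisation of a standard universal-algebra fact (that congruence generation is an algebraic closure operator), with a pointer to Burris--Sankappanavar, and the related Tietze machinery (Proposition 3.15) is likewise only cited. What you have written is, in effect, the explicit argument that the citation hides: you introduce the two translation homomorphisms $\Phi\colon F_X\to F_Y$ and $\Psi\colon F_Y\to F_X$, establish the compatibility identities $\Phi\theta_Y=\theta_X$ and $\Psi\theta_X=\theta_Y$ via the uniqueness clause of Proposition 2.5, and then verify both conditions of Proposition 3.4 for $\langle Y\mid S_1, S_2\rangle$ directly. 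The three points you flag as bookkeeping are all sound: $w=\Phi(\Psi(w))$ really is a single application of a relation from $S_1$, because every element of $F_Y$ has the form $y\cdot m$ and $\Phi(\Psi(y\cdot m))\equiv\Phi(w_y)m$; applying an $M$-homomorphism termwise to an $R$-sequence does yield an $S_2$-sequence, since the internal equalities $q_im_i=p_{i+1}m_{i+1}$ are identities in $F_X$ preserved by $\Phi$ and $\overline{R}$ is carried into $\overline{S_2}$; and concatenating the three sequences (discarding any trivial pieces where the endpoints are identical words) witnesses transitivity of ``consequence''. Compared with invoking the general theory, your route costs a page of explicit verification but buys a self-contained proof in which the finiteness of the resulting presentation (at most $|Y|+|R|$ relations) is completely transparent; it is also precisely the template the paper itself deploys later for harder results such as Theorems 5.8 and 6.1, where a claimed presentation is validated by producing, for each relation holding in the act, a connecting sequence over the new relation set.
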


\begin{prop}
Let $M$ be a monoid, and let $A$ be a finitely presented $M$-act with a presentation $\langle X\,|\,S\rangle$ where $X$ is finite and $S$ is infinite.  
Then there exists a finite subset $S^{\prime}\subseteq S$ such that $A$ is defined by the finite presentation $\langle X\,|\,S^{\prime}\rangle$.
\end{prop}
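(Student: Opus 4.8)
The plan is to reduce to the case where the two presentations share a common alphabet, and then to exploit the fact that the relation ``is a consequence of'' is finitary: a single consequence involves only finitely many defining relations.

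First I would invoke Proposition 3.8. Since $A$ is defined by $\langle X\mid S\rangle$, the finite set $X$ is a generating set for $A$; as $A$ is finitely presented, Proposition 3.8 then supplies a finite presentation $\langle X\mid R\rangle$ for $A$ over this same generating set. Write $R=\{(r_1,r_1'),\dots,(r_n,r_n')\}$.

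Next I would argue, for each fixed $i\in\{1,\dots,n\}$, as follows. Since $\langle X\mid R\rangle$ is a presentation for $A$, the act $A$ satisfies $R$, and in particular satisfies $r_i=r_i'$; so by condition~(2) of Proposition 3.4, applied to the presentation $\langle X\mid S\rangle$, the relation $r_i=r_i'$ is a consequence of $S$. By Definition 3.2 this means either $r_i\equiv r_i'$, or there is an $S$-sequence connecting $r_i$ and $r_i'$; in the latter case the sequence involves only finitely many pairs from $\overline{S}$, hence only finitely many relations from $S$. Let $S_i\subseteq S$ be the (finite) set of relations from $S$ used in this sequence, with $S_i=\emptyset$ if $r_i\equiv r_i'$; the same sequence is then an $S_i$-sequence, so $r_i=r_i'$ is a consequence of $S_i$. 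Setting $S'=S_1\cup\dots\cup S_n$ gives a finite subset of $S$ such that every relation of $R$ is a consequence of $S'$, i.e. $R\subseteq\langle S'\rangle_{\text{cg}}$; since $\langle S'\rangle_{\text{cg}}$ is a congruence on $F_X$ containing $R$, it follows that $\langle R\rangle_{\text{cg}}\subseteq\langle S'\rangle_{\text{cg}}$.

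Finally I would verify, using Proposition 3.4, that $\langle X\mid S'\rangle$ is a presentation for $A$. Condition~(1) is immediate, as $A$ satisfies $S$ and $S'\subseteq S$. For condition~(2), suppose $w_1,w_2\in F_X$ with $A$ satisfying $w_1=w_2$; since $\langle X\mid R\rangle$ is a presentation for $A$, Proposition 3.4 gives $(w_1,w_2)\in\langle R\rangle_{\text{cg}}\subseteq\langle S'\rangle_{\text{cg}}$, so $w_1=w_2$ is a consequence of $S'$. As $X$ and $S'$ are finite, $\langle X\mid S'\rangle$ is the desired finite presentation. The proof is essentially bookkeeping; the only genuine ingredient is that congruence generation is finitary, used in the form that each of the finitely many relations of $R$ is a consequence of a finite part of $S$. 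The point to take care over is to route the argument through the finitely many relations of $R$ rather than trying to deal directly with the infinitely many identities satisfied by $A$, so I do not anticipate a serious obstacle.
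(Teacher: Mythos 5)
Your proof is correct, and it is exactly the argument the paper has in mind: the paper gives no explicit proof of this proposition, instead citing it as a specialisation of the general-algebra fact that congruence generation is an algebraic (finitary) closure operator, which is precisely the ingredient you isolate and work out via the finitely many relations of an auxiliary finite presentation $\langle X\mid R\rangle$ obtained from Proposition 3.8. Nothing is missing.
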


\begin{corollary}\cite[Theorem 2]{Normak}.
Let $M$ be a monoid and let $A$ be a cyclic $M$-act.  
Then $A$ is finitely presented if and only if $A$ is isomorphic to a quotient act of $M$ by a finitely generated right congruence on $M$.
\end{corollary}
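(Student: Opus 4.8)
The plan is to reduce the statement to the single observation that the free $M$-act on one generator is (isomorphic to) the $M$-act $M$ acting on itself by right multiplication, under which $M$-act congruences on the free act correspond exactly to right congruences on the monoid $M$. Write $X=\{x\}$; then the map $F_X\to M$, $x\cdot m\mapsto m$, is an $M$-act isomorphism. For any $R\subseteq F_X\times F_X$, its image $T\subseteq M\times M$ under this isomorphism generates, as a right congruence on $M$, precisely the right congruence corresponding to $\langle R\rangle_{\mathrm{cg}}$; in particular $T$ is finite exactly when $R$ is, and $F_X/\langle R\rangle_{\mathrm{cg}}\cong M/\langle T\rangle$. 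I expect this translation to be entirely routine, since ``the smallest congruence containing a given set'' is defined identically on both sides and transports through any $M$-act isomorphism.

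For the backward direction, suppose $A\cong M/\rho$ where $\rho$ is generated, as a right congruence, by a finite set $T\subseteq M\times M$. Transporting $T$ through $M\cong F_X$ yields a finite $R\subseteq F_X\times F_X$ with $F_X/\langle R\rangle_{\mathrm{cg}}\cong M/\rho\cong A$, so $\langle x\,|\,R\rangle$ is a finite presentation for $A$, and $A$ is finitely presented.

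For the forward direction, suppose $A$ is finitely presented. Since $A$ is cyclic it has a one-element generating set, so by Proposition 3.8 (taking $Y$ to be that singleton) $A$ is defined by a finite presentation $\langle x\,|\,R\rangle$ with $R$ finite. Pushing $R$ through $F_X\cong M$ gives a finite $T\subseteq M\times M$, and then $A\cong F_X/\langle R\rangle_{\mathrm{cg}}\cong M/\langle T\rangle$, which exhibits $A$ as a quotient of $M$ by the finitely generated right congruence $\langle T\rangle$.

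The only substantive ingredient beyond bookkeeping is Proposition 3.8, which is what allows us to replace an arbitrary finite generating set of the cyclic act $A$ by a single generator while retaining a finite presentation; everything else follows from the identification $F_{\{x\}}\cong M$. I therefore do not anticipate a genuine obstacle: the corollary is essentially a dictionary between presentations of cyclic acts and finite right-congruence data on $M$, and the only care required is checking that this dictionary respects finiteness, which it visibly does.
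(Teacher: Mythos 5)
Your proposal is correct and follows essentially the route the paper intends: the corollary is stated as a consequence of Propositions 3.8 and 3.9 together with the identification $F_{\{x\}}\cong M$ (under which act congruences on $F_{\{x\}}$ correspond to right congruences on $M$), and your argument uses exactly Proposition 3.8 plus that identification. The finiteness bookkeeping transports through the isomorphism as you say, so there is no gap.
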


Let $M$ be a monoid with a generating set $S,$ and let $A$ be an $M$-act with a generating set $X$.
It can be easily proved, using Proposition 3.4, that the following are all presentations for $A$:
\begin{equation}
\langle A\,|\,a\cdot m=am~(a\in A, m\in M)\rangle;
\end{equation}
\begin{equation}
\langle X\,|\,x\cdot m=y\cdot n~(x, y\in X, m, n \in M, xm=yn)\rangle;
\end{equation}
\begin{equation}
\langle A\,|\,a\cdot s=as~(a\in A, s\in S)\rangle.
\end{equation}
The above relations should be interpreted in the relevant free acts.
Thus, for instance, the relation $a\cdot m=am$ in (1) stands for $(a, 1)m=(am, 1).$\par
Given the presentation (3), we immediately have the following:

\begin{lemma}
If $M$ is a finitely generated monoid and $A$ is a finite $M$-act, then $A$ is finitely presented.
\end{lemma}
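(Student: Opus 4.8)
The plan is to simply read off the result from presentation (3). Since $M$ is finitely generated, I would fix a finite generating set $S$ for $M$. Since $A$ is finite, it is certainly finitely generated as an $M$-act; in fact $A$ itself is a generating set for $A$, because $a = a\cdot 1$ for every $a \in A$, so $A = \langle A\rangle$. Taking $X = A$ in presentation (3), we obtain
\[
\langle A \mid a\cdot s = as \ (a\in A,\ s\in S)\rangle,
\]
and this is a presentation for $A$ by the discussion immediately preceding the lemma (which is justified using Proposition 3.4).

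Next I would check that this presentation is finite. The generating set $A$ is finite by hypothesis. The defining relations are indexed by pairs $(a,s)\in A\times S$, so there are at most $|A|\cdot|S|$ of them, which is a finite number since $A$ and $S$ are both finite. Hence $A$ is defined by a finite presentation, i.e.\ $A$ is finitely presented, as required.

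I do not expect any genuine obstacle here: the entire content of the statement is already contained in the assertion (established earlier) that (3) is a valid presentation for an arbitrary $M$-act over an arbitrary monoid. The only small points to make explicit are that one may legitimately use the finite set $A$ as the generating set, and that finite generation of $M$ lets us choose $S$ finite; once both choices are made, finiteness of the presentation is immediate by counting.
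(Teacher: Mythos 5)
Your proposal is correct and matches the paper exactly: the lemma is stated immediately after presentation (3) with the remark that it follows at once from that presentation, which is precisely your argument of taking the finite set $A$ as generators and a finite generating set $S$ for $M$, then counting the relations. No issues.
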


If $M$ is a non-finitely generated monoid, however, then finite $M$-acts are not necessarily finitely presented, as the following example demonstrates.

\begin{ex}
Let $M=X^{\ast}$ be a free monoid with $X$ infinite, and consider the trivial $M$-act $A=\{0\}$.
Now $A$ is defined by the presentation 
$$\langle 0\,|\,0\cdot x=0~(x\in X)\rangle.$$
If $A$ were finitely presented, then it could be defined by a finite presentation 
$$P=\langle 0\mid 0\cdot x=0~(x\in X_0)\rangle,$$
where $X_0$ is a finite subset of $X$.
But for $x\not\in X_0$, the relation $0\cdot x=0$ is clearly not a consequence of the relations of $P$, so $A$ is not finitely presented.
\end{ex}

\begin{remark}
One may be tempted to think that the trivial $M$-act being finitely presented is equivalent to $M$ being finitely generated.
However, the trivial $M$-act is in fact finitely presented for a much larger class of monoids $M$.
For example, if $M$ is a monoid with a left zero $z$, it can be easily proved that the trivial $M$-act $\{0\}$ is defined by the finite presentation $\langle 0\,|\,0=0\cdot z\rangle$.
\end{remark}

The following lemma provides a necessary and sufficient condition for the trivial act to be finitely presented.

\begin{lemma}
Let $M$ be a monoid.  Then the trivial $M$-act $\{0\}$ is finitely presented if and only if there exists a finitely presented $M$-act $A$ which contains a zero.
\end{lemma}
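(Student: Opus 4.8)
The forward implication is immediate: if $\{0\}$ is finitely presented, then taking $A=\{0\}$ does the job, since the trivial act contains a zero (namely its unique element). So the whole content lies in the converse, and my plan is to convert a given finite presentation of $A$ into a finite presentation of $\{0\}$ by adjoining only finitely many new relations.

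Assume $A$ is finitely presented with a zero $z$. By definition we may fix a finite presentation $A\cong F_X/\langle R\rangle_{\text{cg}}$ with $X=\{x_1,\dots,x_n\}$ and $R$ finite; let $a_i\in A$ denote the element represented by $x_i$, so that $\{a_1,\dots,a_n\}$ generates $A$. The crucial point is that $\{0\}$ is exactly the quotient $A/\omega_A$ of $A$ by its universal congruence $\omega_A=A\times A$, and that $\omega_A$ is \emph{finitely generated} as a congruence on the $M$-act $A$; in fact I claim $\omega_A=\langle(a_1,z),\dots,(a_n,z)\rangle_{\text{cg}}=:\sigma$. One inclusion is trivial. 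For the reverse, given $a,b\in A$ write $a=a_im$ and $b=a_jm'$ for suitable indices and $m,m'\in M$; applying $m$ to $(a_i,z)\in\sigma$ gives $(a_im,zm)\in\sigma$, and since $z$ is a zero this is $(a,z)\in\sigma$, and similarly $(b,z)\in\sigma$, so $(a,b)\in\sigma$ by symmetry and transitivity. Hence $\omega_A=\sigma$, a finitely generated congruence.

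It then remains to push this back to $F_X$. Choose a word $w\in F_X$ representing $z$ — concretely $w\equiv x_{i_0}\cdot t_0$ where $z=a_{i_0}t_0$, which exists because $\{a_i\}$ generates $A$ — and put $R'=R\cup\{x_i=w : 1\le i\le n\}$. This is still finite. By the Third Isomorphism Theorem, $F_X/\langle R'\rangle_{\text{cg}}$ is $M$-isomorphic to $A/\sigma'$, where $\sigma'$ is the congruence on $A$ generated by the images of the relations in $R'\setminus R$, i.e.\ by $\{(a_i,z):1\le i\le n\}$; by the claim $\sigma'=\omega_A$, so $F_X/\langle R'\rangle_{\text{cg}}\cong A/\omega_A\cong\{0\}$. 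Thus $\langle X\mid R'\rangle$ is a finite presentation of the trivial $M$-act, completing the proof.

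I do not expect a genuine obstacle: the argument hinges entirely on the small lemma that the universal congruence of a finitely generated act with a zero is finitely generated, and the only things to be careful about are the routine transfer of the generating pairs $(a_i,z)$ from $A$ to $F_X$ and the correct application of the Third Isomorphism Theorem to identify the new quotient; it is also worth recording at the outset that finite presentability furnishes a presentation over a finite generating set, so that $X$ above may indeed be taken finite.
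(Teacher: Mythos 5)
Your proof is correct, but it takes a genuinely different route from the paper's. The paper first arranges (implicitly via Proposition 3.8) for the zero $0$ to be one of the generators in a finite presentation $\langle X\mid R\rangle$ of $A$, and then passes to the one-generator presentation $\langle 0\mid R'\rangle$, where $R'$ is obtained by replacing every generator occurring in the relations of $R$ by $0$; the verification that each relation $0\cdot m=0$ follows from $R'$ is done by hand, by taking an $R$-sequence witnessing $0\cdot m=0$ in $A$ and substituting $0$ for every generator to turn it into an $R'$-sequence. You instead keep the original finite generating set, adjoin the finitely many relations $x_i=w$ identifying each generator with the zero $z$, and argue abstractly: the universal congruence $\omega_A$ is generated by the pairs $(a_i,z)$ (using $zm=z$ together with symmetry and transitivity), so the Third Isomorphism Theorem, combined with the standard correspondence between congruences on $F_X$ containing $\langle R\rangle_{\text{cg}}$ and congruences on $A$, identifies the new quotient with $A/\omega_A\cong\{0\}$. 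Both arguments are sound. Yours is more conceptual and isolates a reusable fact --- the universal congruence of a finitely generated act with a zero is finitely generated --- at the cost of producing a presentation on $n$ generators rather than on the single generator $0$; the paper's computation delivers the one-generator form $\langle 0\mid S\rangle$ that is fed directly into Theorem 4.1, although Proposition 3.8 converts between the two, so the results are equivalent.
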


\begin{proof}
The direct implication is obvious.
For the converse, let $A$ be an $M$-act with a zero $0$, and suppose that $A$ is defined by a finite presentation $\langle X\,|\,R\rangle$ where $0\in X$.
We define a finite set
$$R^{\prime}=\{0\cdot m=0\cdot n : (x\cdot m, y\cdot n)\in R\text{ for some } x, y\in X\},$$
and claim that $\{0\}$ is defined by the presentation $\langle 0\,|\,R^{\prime}\rangle$.
We need to show that for any $m\in M$, the relation $0\cdot m=0$ is a consequence of $R^{\prime}$.
Let $m\in M$.  Since $0\cdot m=0$ holds in $A$, it is a consequence of $R$, so we have an $R$-sequence connecting $0\cdot m$ and $0$.
Now, replacing every $x\in X$ appearing in this $R$-sequence with $0$, 
we obtain an $R^{\prime}$-sequence connecting $0\cdot m$ and $0$, so $0\cdot m=0$ is a consequence of $R^{\prime}$.
\end{proof}

Tietze transformations (for acts) provide a method for yielding a new presentation for a monoid act from a known presentation.
Given a presentation $\langle X\,|\,R\rangle$ for an $M$-act $A$, the {\em elementary Tietze transformations} are:
\begin{itemize}
\item[(T1)] adding new relations $u_i=v_i, i\in I,$ to $\langle X\,|\,R\rangle$, providing that each $u_i=v_i$ is a consequence of $R$;
\item[(T2)] deleting relations $u_i=v_i, i\in I,$ from $R$, providing that each $u_i=v_i$ is a consequence of $R\setminus\{u_i=v_i : i\in I\}$;
\item[(T3)] adding new generating elements $y_i, i\in I,$ and new relations $y_i=w_i, i\in I,$ to $\langle X\,|\,R\rangle$, for any $w_i\in F_X$;
\item[(T4)] if $\langle X\,|\,R\rangle$ has relations $x_i=w_i, i\in I,$ where $x_i\in X$ and $w_i\in F_{X^{\prime}}$ where $X^{\prime}=X\setminus\{x_i : i\in I\}$, then deleting each $x_i$ from $X$, deleting each $x_i=w_i$ from $R$, and replacing all remaining appearances of $x_i$ with $w_i$.
\end{itemize}

\begin{prop}
Two presentations define the same $M$-act if and only if one can be obtained from the other by a finite number of applications of elementary Tietze transformations.
\end{prop}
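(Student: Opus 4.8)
The plan is to prove the two implications separately, the bulk of the argument lying in the \emph{only if} direction. For the \emph{if} direction it suffices to check that each elementary transformation (T1)--(T4), applied to a presentation $\langle X\mid R\rangle$ of an $M$-act $A$, yields a presentation of an $M$-act isomorphic to $A$. For (T1) and (T2) this is immediate from Lemma 3.3: a relation is a consequence of $R$ exactly when it lies in $\langle R\rangle_{\text{cg}}$, so adjoining or deleting consequences changes neither $\langle R\rangle_{\text{cg}}$ nor $F_X/\langle R\rangle_{\text{cg}}$. For (T3), put $Y=\{y_i:i\in I\}$ and $R^{+}=R\cup\{y_i=w_i:i\in I\}$; by Proposition 2.5 the map $\theta\colon F_{X\cup Y}\to A$ extending $x\mapsto[x]$ ($x\in X$) and $y_i\mapsto[w_i]$ is a surjective $M$-homomorphism, and the substitution $y_i\mapsto w_i$, extended to $\tau\colon F_{X\cup Y}\to F_X$ fixing $X$, satisfies $\theta\tau=\theta$ on generators and hence everywhere; tracking this substitution on a pair of $\ker\theta$, together with the fact that $\langle X\mid R\rangle$ defines $A$, shows $\ker\theta=\langle R^{+}\rangle_{\text{cg}}$, so the transformed presentation defines $A$. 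For (T4), the substitution $x_i\mapsto w_i$ extends to an $M$-homomorphism $\sigma\colon F_X\to F_{X'}$ fixing $X'$; one checks that $\sigma$ carries an $R$-sequence to an $R'$-sequence, steps applying $x_i=w_i$ becoming trivial, and from this the transformed presentation is again seen to define $A$.

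For the \emph{only if} direction, suppose $A$ is defined both by $\langle X\mid R\rangle$ and by $\langle Y\mid S\rangle$. Renaming the generators of the latter is achieved by one application of (T3), adjoining a copy $\widetilde Y$ of $Y$ disjoint from $X$ with relations $\widetilde y=y$, followed by one of (T4) deleting $Y$, so we may assume $X\cap Y=\emptyset$. Identifying $A$ with $F_X/\langle R\rangle_{\text{cg}}$, and also with $F_Y/\langle S\rangle_{\text{cg}}$, choose for each $y\in Y$ a word $v_y\in F_X$ representing the same element of $A$ as $y$, and for each $x\in X$ a word $w_x\in F_Y$ representing the same element as $x$. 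Now perform, in order: (i) by (T3), adjoin all generators $y\in Y$ with the relations $y=v_y$; (ii) by (T1), adjoin all relations of $S$; (iii) by (T1), adjoin all relations $x=w_x$ ($x\in X$); (iv) by (T4), delete every $x\in X$ and every relation $x=w_x$, substituting $w_x$ for $x$ in the remaining relations; (v) by (T2), delete every relation other than those of $S$. After (v) the generating set is $Y$ and the relation set is $S$, so $\langle Y\mid S\rangle$ has been reached; since each of (i)--(v), and each renaming step, is a single Tietze transformation (the index sets in (T1)--(T4) being arbitrary), only finitely many transformations are used, even for infinite presentations.

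To justify admissibility I would carry along the invariant, supplied by the \emph{if} direction, that after each transformation the current presentation still defines $A$. Step (i) needs no hypothesis. For (ii), every relation of $S$ holds in $A$, so by Lemma 3.3 it is a consequence of the current relations; for (iii), $x=w_x$ holds in $A$ by choice of $w_x$, so again by Lemma 3.3 it is a consequence of the current relations. Step (iv) is legitimate because at that stage the presentation literally contains the relations $x=w_x$ with $x\in X$ and $w_x\in F_Y=F_{(X\cup Y)\setminus X}$, the precise shape (T4) requires. For (v), after (iv) the generating set is $Y$, every relation present holds in $A$, and $\langle Y\mid S\rangle$ defines $A$, so Lemma 3.3 makes every such relation a consequence of $S$.

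The main obstacle I anticipate is not conceptual but careful bookkeeping around step (iv): one must check that, after the substitution $x\mapsto w_x$, the relations of $R$ and the relations $y=v_y$ — currently words over $X\cup Y$ — become bona fide relations over $F_Y$, and that the resulting presentation still defines $A$, so that step (v) is licensed. The other recurring point requiring care is the repeated use of Lemma 3.3, which is valid only because the ``defines $A$'' invariant is maintained explicitly throughout; with that invariant in place, each verification is routine.
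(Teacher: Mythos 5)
Your proposal is correct and follows exactly the standard Tietze-transformation argument that the paper defers to in Remark 3.16 (citing the semigroup-theoretic analogue): verify that each of (T1)--(T4) preserves the act being defined, then pass between two given presentations by adjoining the second set of generators and relations and deleting the first, using Lemma 3.3 at each stage to license the additions and deletions. The paper gives no proof of its own, and your account, including the observation that each elementary transformation may involve an infinite index set so that only finitely many transformations are needed, is a faithful rendering of the intended argument.
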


\begin{remark}
The proof of Proposition 3.15 is essentially the same as the proof for its analogue in group theory or semigroup theory.
To see an idea of the proof, one may consult \cite[Theorem 2.5]{Ruskuc1}.
\end{remark}

\begin{corollary}
Let $M$ be a monoid, and let $A$ be an $M$-act defined by a presentation $\langle X\,|\,S\rangle$ where $X$ is finite and $S$ is infinite.  
Then $A$ is finitely presented if and only if there exists a finite subset $S^{\prime}\subseteq S$ such that every relation from $S$ is a consequence of $S^{\prime}$.
\end{corollary}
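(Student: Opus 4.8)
The plan is to derive this from Proposition 3.9 together with the transitivity of the ``is a consequence of'' relation, equivalently the fact that $\langle\,\cdot\,\rangle_{\text{cg}}$ is a closure operator.

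For the forward implication, assume $A$ is finitely presented. Since $A$ is defined by $\langle X\mid S\rangle$ with $X$ finite and $S$ infinite, Proposition 3.9 provides a finite subset $S'\subseteq S$ such that $A$ is also defined by $\langle X\mid S'\rangle$. Given an arbitrary relation $(u,v)\in S$: because $A$ is defined by $\langle X\mid S\rangle$ it satisfies $S$, so $u=v$ holds in $A$; and because $A$ is also defined by $\langle X\mid S'\rangle$, Lemma 3.3 shows that $u=v$ is a consequence of $S'$. Thus every relation from $S$ is a consequence of the finite set $S'$.

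For the converse, suppose that there is a finite $S'\subseteq S$ such that every relation from $S$ is a consequence of $S'$. I would show that $\langle X\mid S'\rangle$ also defines $A$, whence $A$ is finitely presented since $X$ and $S'$ are finite. Put $\rho=\langle S\rangle_{\text{cg}}$ and $\rho'=\langle S'\rangle_{\text{cg}}$, so that $A\cong F_X/\rho$. The inclusion $S'\subseteq S$ gives $\rho'\subseteq\rho$. For the reverse inclusion, the hypothesis together with Lemma 2.2 yields $S\subseteq\rho'$; since $\rho'$ is a congruence containing $S$ and $\rho$ is the least such, $\rho\subseteq\rho'$. Hence $\rho=\rho'$, so $\langle X\mid S'\rangle$ defines $A$. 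Alternatively, a single application of the Tietze transformation (T1) adjoins all relations of $S\setminus S'$ to $\langle X\mid S'\rangle$ — legitimately, since each such relation is a consequence of $S'$ — producing $\langle X\mid S\rangle$, so Proposition 3.15 gives that both presentations define $A$.

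I do not expect a genuine obstacle here: the only delicate point is the transitivity invoked in the converse — that a relation which is a consequence of relations each themselves a consequence of $S'$ is again a consequence of $S'$ — which is precisely the assertion that $\langle S'\rangle_{\text{cg}}$, being a congruence, is closed under taking consequences.
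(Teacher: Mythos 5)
Your proposal is correct and follows essentially the same route as the paper: the forward direction uses Proposition 3.9 to obtain a finite $S'\subseteq S$ defining $A$ and then notes each relation of $S$ holds in $A$ and is therefore a consequence of $S'$, and the converse is the paper's Tietze-transformation argument (your direct computation with $\langle S\rangle_{\text{cg}}=\langle S'\rangle_{\text{cg}}$ is just an unwinding of the same step). No gaps.
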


\begin{proof}
Suppose that $A$ is finitely presented.
By Proposition 3.9, there exists a finite subset $S^{\prime}\subseteq S$ such that $A$ is defined by a finite presentation $\langle X\,|\,S^{\prime}\rangle$.
Therefore, since every relation from $S$ holds in $A$, it must be a consequence of $S^{\prime}$.\par
Conversely, suppose that there exists a finite subset $S^{\prime}\subseteq S$ such that every relation from $S$ is a consequence of $S^{\prime}$.
Using Tietze transformations, we can delete every relation from $S\setminus S^{\prime}$.
By Proposition 3.15, we have that $A$ is defined by the finite presentation $\langle X\,|\,S^{\prime}\rangle$.
\end{proof}

\section{\large{Rees quotients}\nopunct}

\noindent Let $M$ be a monoid, let $A$ be an $M$-act and let $B$ be a subact of $A$.
Recall that the Rees quotient $A/B$ is the quotient act resulting from the Rees congruence $\rho_B$ on $A$ given by
$$a\rho_B b \iff a=b \text{ or } a, b\in B$$
for all $a, b\in A$.  We shall identify the $\rho_B$-class $\{a\}\in A/B$ with $a$ for each $a\in A\setminus B$, and denote the $\rho$-class $B\in A/B$ by $0$.\par 
The purpose of this section is, on the one hand, to construct a presentation for $A/B$ using a presentation for $A$ and a generating set for $B$,
and on the other hand, to derive a presentation for $A$ using presentations for $B$ and $A/B$.
These general presentations will give rise to corollaries pertaining to finite presentability.\par
Let $X$ be any generating set for $A$.
We now give a presentation for $A/B$ in terms of the generators $(X\setminus B)\cup\{0\}$.

\begin{thm}
Let $M$ be a monoid.
Let $A$ be an $M$-act defined by a presentation $\langle X\,|\,R\rangle$, let $B$ be a subact of $A$ generated by $Y$, and let $\langle 0\,|\,S\rangle$ be a presentation for the trivial $M$-act $\{0\}$.
For each $y\in Y$, choose $w_y\in F_X$ such that $y=w_y$ holds in $A$, and let $R^{\prime}=R\cup\{y=w_y : y\in Y\}$.
We now define the sets
\begin{align*}
R_1&=\{(u, v)\in R : u\emph{ represents an element of }A\setminus B\},\\
R_2&=\{u=0 : u\in F_{X\setminus B}\cap L(X, B), (u, v)\in\overline{R^{\prime}}\emph{ for some }v\in F_{X\cup Y}\},
\end{align*}
where $L(X, B)$ denotes the set of elements of $F_X$ which represent elements of $B$.
Then $A/B$ is defined by the presentation $\langle X\setminus B, 0\,|\,R_1, R_2, S\rangle$.
\end{thm}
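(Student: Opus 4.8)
The plan is to apply the characterisation of Proposition 3.4 to the generating set $Z:=(X\setminus B)\cup\{0\}$ of $A/B$ (that $Z$ generates $A/B$ is immediate, since any non-zero element of $A/B$ is $[xm]$ for some $x\in X$, $m\in M$ with $x\notin B$). Two facts will be used repeatedly: every word of $F_X$ or of $F_{X\cup Y}$ is uniquely of the form $z\cdot k$ with $z$ a generator and $k\in M$, and it represents an element of $B$ whenever $z\in B$ (recall $Y\subseteq B$); and, since adding the generators $Y$ together with the relations $y=w_y$ to $\langle X\,|\,R\rangle$ is a Tietze transformation of type (T3), by Proposition 3.15 the presentation $\langle X\cup Y\,|\,R^{\prime}\rangle$ also defines $A$, with each $y\in Y$ representing the element $y\in B$, so that (by Lemma 3.3) two words of $F_{X\cup Y}$ represent the same element of $A$ if and only if they are connected by an $R^{\prime}$-sequence (or are identical). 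With these in hand, condition (1) of Proposition 3.4 is routine: a relation of $R_1$ equates two words representing a common element of $A\setminus B$, hence a common non-zero element of $A/B$; a relation $u=0$ of $R_2$ has $u$ representing an element of $B$, hence $0$; and a relation of $S\subseteq F_{\{0\}}\times F_{\{0\}}$ holds because the generator $0$ maps to the zero of $A/B$. One checks at the same time that all these relations lie in $F_Z\times F_Z$ — for $R_1$ because a word representing an element of $A\setminus B$ has its generator in $X\setminus B$.

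For condition (2), I would take $w_1,w_2\in F_Z$ representing the same element of $A/B$, with $w_1\not\equiv w_2$, and split into cases. If the common element is some $a\in A\setminus B$, then each $w_i$ has its generator in $X\setminus B$, so $w_1,w_2\in F_{X\setminus B}\subseteq F_X$, and in fact they represent $a$ in $A$ (the $\rho_B$-class of $a$ being $\{a\}$), so they are connected by an $R$-sequence in $F_X$. Every word in this sequence represents $a$, hence has its generator in $X\setminus B$, and every relation $(p,q)\in\overline R$ used has both sides representing a common element of $A\setminus B$, so $(p,q)\in\overline{R_1}$. Thus the $R$-sequence is already an $R_1$-sequence over $Z$, and $w_1=w_2$ is a consequence of $R_1$.

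The substantive case is when $w_1$ and $w_2$ both represent $0$; by transitivity it suffices to show that every $w=z\cdot k\in F_Z$ representing $0$ satisfies $w=0$ as a consequence of $R_1\cup R_2\cup S$. If $z=0$ this follows from $S$ (the relation holds in the trivial act). If $z=x\in X\setminus B$, then $xk\in B$, so $xk=yn$ for some $y\in Y$, $n\in M$, and in $\langle X\cup Y\,|\,R^{\prime}\rangle$ there is an $R^{\prime}$-sequence joining $x\cdot k$ to $y\cdot n$; let $v_0=x\cdot k,\dots,v_t=y\cdot n$ be its successive words, all representing $xk\in B$. The generator of $v_0$ lies in $X\setminus B$ while that of $v_t$ does not, so let $j$ be largest with $v_0,\dots,v_j$ all having their generator in $X\setminus B$; then $0\le j<t$. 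For $1\le i\le j$, the two sides $p_i,q_i$ of the relation applied in passing from $v_{i-1}$ to $v_i$ both lie in $F_{X\setminus B}$ (so this relation comes from $\overline R$): if $p_i,q_i$ represent an element of $A\setminus B$ then $(p_i,q_i)\in\overline{R_1}$, and if they represent an element of $B$ then $p_i,q_i\in F_{X\setminus B}\cap L(X,B)$ are sides of relations of $\overline{R^{\prime}}$, so $(p_i=0),(q_i=0)\in R_2$, and using these (acted on by $k_i$) together with $S$ both $v_{i-1}$ and $v_i$ reduce to $0$; either way $v_{i-1}=v_i$ is a consequence of $R_1\cup R_2\cup S$. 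At the crucial step $i=j+1$, the left side $p_{j+1}$ lies in $F_{X\setminus B}$ but equals $q_{j+1}$ in $A$, and $q_{j+1}$, having its generator in $B$, represents an element of $B$; hence $p_{j+1}\in F_{X\setminus B}\cap L(X,B)$ is a side of a relation of $\overline{R^{\prime}}$, so $(p_{j+1}=0)\in R_2$, and $v_j=p_{j+1}k_{j+1}$ reduces to $0$ via $R_2$ and $S$. Chaining these yields $w=v_0=\dots=v_j=0$.

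I expect the main obstacle to be this last case: one has to track, step by step along the $R^{\prime}$-sequence, which words have their generator inside $X\setminus B$, and verify that at the first step leaving $X\setminus B$ the relevant side of the defining relation is simultaneously a word over $X\setminus B$, a word representing an element of $B$, and a side of a relation of $\overline{R^{\prime}}$ — exactly the data defining a relation of $R_2$. The reason for passing to the auxiliary presentation $\langle X\cup Y\,|\,R^{\prime}\rangle$ rather than working with $\langle X\,|\,R\rangle$ directly is precisely to force such a transition to exist, the target word $y\cdot n$ having its generator $y\in Y\subseteq B$.
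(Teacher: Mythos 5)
Your proposal is correct and follows essentially the same route as the paper's proof: pass to the auxiliary presentation $\langle X\cup Y\,|\,R^{\prime}\rangle$ via Tietze transformations, trace an $R^{\prime}$-sequence from a word representing $0$ to a word over $Y$, and locate the first transition where a word over $X\setminus B$ represents an element of $B$, at which point a relation of $R_2$ followed by $S$ collapses everything to $0$. The only (harmless) difference is bookkeeping: the paper stops at the minimal index where $p_i\in F_{X\setminus B}$ represents an element of $B$, so that the preceding steps are pure $R_1$-steps, whereas you track the maximal prefix whose generators stay in $X\setminus B$ and therefore need the extra sub-case, handled via $R_2$ and $S$, for intermediate words whose sides already represent elements of $B$.
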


\begin{proof}
It is clear that $A/B$ satisfies $R_1$, $R_2$ and $S$.
Let $X^{\prime}=X\setminus B$, and let $w_1, w_2\in F_{X^{\prime}\cup\{0\}}$ such that $w_1=w_2$ holds in $A/B$.
By Proposition 3.4, we just need to show that $w_1=w_2$ is a consequence of $R_1$, $R_2$ and $S$.\par
If $w_1$ represents an element of $A\setminus B$, then $w_1=w_2$ is a consequence of $R_1$.\par
Suppose $w_1$ represents $0$ in $A/B$.
We claim that $w_1=0$ is a consequence of $R_1\cup R_2\cup S$.
If $w_1\in F_0$, then $w_1=0$ is a consequence of $S$.
Now assume that $w_1\in F_{X^{\prime}}$.
By Proposition 3.15, we have that $A$ is defined by the presentation $\langle X, Y\,|\, R^{\prime}\rangle$.
Choose $w_1^{\prime}\in F_Y$ such that $w_1=w_1^{\prime}$ holds in $A$.
Then $w_1=w_1^{\prime}$ is a consequence of $R^{\prime}$, so there exists an $R^{\prime}$-sequence
$$w_1=p_1m_1, q_1m_1=p_2m_2, \dots, q_km_k=w_1^{\prime},$$
where $(p_i, q_i)\in\overline{R^{\prime}}$ and $m_i\in M$ for $1\leq i\leq k$.
Note that for $i\in\{1, \dots, k-1\}$, if $p_i$ (and hence $q_i$) represents an element of $A\setminus B$, then $p_{i+1}\in F_{X^{\prime}}$.
Therefore, since $p_1\in F_{X^{\prime}}$ and $p_k$ represents an element of $B$ ($p_k=q_k$ in $A$ and $q_k\in F_Y$),
we may choose $i$ minimal such that $p_i\in F_{X^{\prime}}$ and $p_i$ represents an element of $B.$
We then have that $w_1=p_im_i$ is a consequence of $R_1$, and we obtain $0\cdot m_i$ from $p_im_i$ by an application of a relation from $R_2$.
Now, since $0\cdot m_i=0$ is a consequence of $S$, we deduce that $w_1=0$ is a consequence of $R_1$, $R_2$ and $S.$
This proves the claim.  Exactly the same argument proves that $w_2=0$ is a consequence of $R_1\cup R_2\cup S,$
and hence so is $w_1=0=w_2$, as required.
\end{proof}

\begin{corollary}
Let $M$ be a monoid, let $A$ be a finitely presented $M$-act, and let $B$ be a finitely generated subact of $A$.
Then $A/B$ is finitely presented if and only if the trivial $M$-act is finitely presented (which includes all finitely generated monoids).
\end{corollary}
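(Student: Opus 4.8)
The plan is to read off both implications directly from the explicit presentation for $A/B$ furnished by Theorem 4.1, namely $\langle X\setminus B, 0\,|\,R_1, R_2, S\rangle$, combined with Lemma 3.12. For the forward implication, I would suppose $A/B$ is finitely presented and simply note that $A/B$ contains a zero, namely the element $0$; Lemma 3.12 then immediately yields that the trivial $M$-act $\{0\}$ is finitely presented, so there is nothing more to do.

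For the converse, I would assume that $\{0\}$ is finitely presented and fix a finite presentation $\langle 0\,|\,S\rangle$ for it. Since $A$ is finitely presented, fix a finite presentation $\langle X\,|\,R\rangle$ for $A$, and since $B$ is finitely generated, fix a finite generating set $Y$ for $B$. As $X$ generates $A$, for each $y\in Y$ one can choose $w_y\in F_X$ representing $y$, so that $R^{\prime}=R\cup\{y=w_y : y\in Y\}$, and hence $\overline{R^{\prime}}$, is finite. I would then check that every ingredient of the presentation in Theorem 4.1 is finite: the generating set $X\setminus B$ is finite because $X$ is; $R_1\subseteq R$ is finite; and $R_2$ is finite because each of its relations has the form $u=0$ where $u$ is a first component of a pair in the finite set $\overline{R^{\prime}}$. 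Thus $\langle X\setminus B, 0\,|\,R_1, R_2, S\rangle$ is a finite presentation for $A/B$, so $A/B$ is finitely presented. Finally, the parenthetical claim follows from Lemma 3.10 applied to the finite $M$-act $\{0\}$: if $M$ is finitely generated then $\{0\}$ is finitely presented, so the hypothesis of the converse is automatically satisfied.

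I do not expect any genuinely hard step here, since the substance is entirely contained in Theorem 4.1 and Lemma 3.12. The one point requiring a moment's care is the finiteness of $R_2$: although $R_2$ is defined by a condition quantifying over words $u$ in the free act, those $u$ are constrained to occur as first components of pairs in $\overline{R^{\prime}}$, which is finite, so only finitely many relations survive. Everything else is routine bookkeeping.
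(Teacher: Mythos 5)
Your proof is correct and follows essentially the same route as the paper: the forward direction via the characterisation of when the trivial act is finitely presented (this is Lemma~3.14 in the paper's numbering, not 3.12) applied to the zero of $A/B$, and the converse by observing that the presentation of Theorem~4.1 becomes finite under the hypotheses (the finitely-generated-monoid parenthetical being Lemma~3.11, not 3.10). Apart from these two reference slips, your finiteness bookkeeping for $X\setminus B$, $R_1$ and $R_2$ is exactly the verification the paper leaves implicit.
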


\begin{proof}
If $A/B$ is finitely presented, then it follows from Lemmma 3.14 that the trivial $M$-act is finitely presented, since $A/B$ contains a zero.
The converse follows immediately from Theorem 4.1.
\end{proof}

We now turn to our second aim in this section: assembling a presentation for $A$ from those for a subact and the Rees quotient.
So, let $M$ be a monoid, let $A$ be an $M$-act and let $B$ be a subact of $A$.
Let $X$ be a generating set for $B$ and let $Y$ be a generating set for $A/B$, and let $Y^{\prime}=Y\setminus\{0\}$.
Note that if $A\setminus B$ is a subact of $A$, the element $0$ must belong to $Y$, and so $Y=Y^{\prime}\cup\{0\}$.
If $A\setminus B$ is not a subact of $A/B$, then there exist elements $w\in F_{Y^{\prime}}$ that represent $0$ in $A/B$, and the set $Y$ need not contain $0$.
We shall now give a presentation for $A$ in terms of the generators $X\cup Y^{\prime}$.

\begin{thm}
Let $M$ be a monoid, let $A$ be an $M$-act and let $B$ be a subact of $A$.
Let $\langle X\,|\,R\rangle$ and $\langle Y\,|\,S\rangle$ be presentations for $B$ and $A/B$ respectively, and let $Y^{\prime}=Y\setminus\{0\}$.
If $A\setminus B$ is not a subact of $A/B$, for each $w\in F_{Y^{\prime}}$ that represents $0$ in $A/B$ choose $\alpha_w\in F_X$ such that $w=\alpha_w$ in $A$, and also fix one of them and denote it by $z$.
We now define the sets
\begin{align*}
S_1&=\{(u, v)\in S : u\emph{ represents an element of }A\setminus B\};\\
S_2&=\{u=\alpha_u: (u, v)\in\overline{S}\emph{ for some }v\in F_Y, u\in F_{Y^{\prime}}, u=0\emph{ in }A/B\}\cup \{z=\alpha_z\}.
\end{align*}
Then $A$ is defined by the presentation $\langle X, Y^{\prime}\,|\,R, S_1, S_2\rangle$.
\end{thm}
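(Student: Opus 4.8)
The plan is to verify that $\langle X, Y'\mid R, S_1, S_2\rangle$ satisfies the two conditions of Proposition 3.4, after fixing the relevant maps. The generators $X$ already name the elements of the subact $B\subseteq A$, and each $y\in Y'$ is taken to name the element of $A\setminus B$ it represents in $A/B$ (in the degenerate situation where some $y\in Y'$ represents $0$ in $A/B$, let it name any fixed element of $B$; when $A\setminus B$ is a subact no word of $F_{Y'}$ represents $0$, so this does not arise). First I would check that $X\cup Y'$ generates $A$: the elements of $B$ come from $X$, and given $a\in A\setminus B$ we choose $y\in Y$ and $m\in M$ with $a=ym$ in $A/B$; as $[a]\neq 0$ we must have $y\in Y'$ representing an element of $A\setminus B$, and then $a$ and $ym$, both in $A\setminus B$ and equal in $A/B$, coincide in $A$ (the map $A\setminus B\hookrightarrow A/B$ being injective). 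Condition (1) is then routine: the relations $R$ hold in $B$, hence in $A$; a relation of $S_1$ holds in $A/B$ with both sides representing the same element of $A\setminus B$, hence holds in $A$; and the relations of $S_2$ hold in $A$ by the choice of the words $\alpha_u$ and $\alpha_z$.

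For condition (2), take $w_1, w_2\in F_{X\cup Y'}$ with $w_1=w_2$ in $A$; they represent a common element, lying in $A\setminus B$ or in $B$. I would use repeatedly the observation that a word in $F_{X\cup Y'}$ (respectively, in $F_Y$) representing an element of $A\setminus B$ cannot begin with a letter of $X$ (respectively, with the generator $0$), and hence lies in $F_{Y'}$. Suppose first that $w_1$ represents an element of $A\setminus B$. Then $w_1,w_2\in F_{Y'}\subseteq F_Y$ and $w_1=w_2$ holds in $A/B$, so by Lemma 3.3 there is an $S$-sequence joining them in $F_Y$. Each term of this sequence equals $w_1$ in $A/B$, so represents an element of $A\setminus B$ and lies in $F_{Y'}$; and for a step using $(p,q)\in\overline{S}$, both $p$ and $q$ represent elements of $A\setminus B$, so whichever of $(p,q),(q,p)$ lies in $S$ actually lies in $S_1$. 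Hence the sequence is an $S_1$-sequence, and $w_1=w_2$ is a consequence of $S_1$.

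The case where $w_1$ represents an element of $B$ I would reduce to the claim that every $w\in F_{X\cup Y'}$ representing an element of $B$ satisfies $w=\beta$, for some $\beta\in F_X$, as a consequence of $R\cup S_1\cup S_2$. Granting it, $w_1=\beta_1$ and $w_2=\beta_2$ with $\beta_1,\beta_2\in F_X$ representing the same element of $B$ (the relations being valid in $A$); then $\beta_1=\beta_2$ is a consequence of $R$ since $\langle X\mid R\rangle$ presents $B$, and $w_1=\beta_1=\beta_2=w_2$ follows. To prove the claim, write $w=x\cdot m$. If $x\in X$, take $\beta=w$. If $x\in Y'$, then $w\in F_{Y'}$ represents $0$ in $A/B$, and I would induct on the length $n$ of an $S$-sequence joining $w$ to the fixed word $z$. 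If $n=0$ then $w\equiv z$, and the relation $z=\alpha_z$ of $S_2$ gives $w=\alpha_z$ with $\alpha_z\in F_X$. If $n\geq 1$, let the first step use $(p,q)\in\overline{S}$ and $m'\in M$ with $w\equiv pm'$; one checks $p\in F_{Y'}$. If $p$ represents $0$ in $A/B$, then $p=\alpha_p$ lies in $S_2$ and applying it sends $w$ to $\alpha_p m'\in F_X$, as wanted. Otherwise $p$, and hence $q$, represents an element of $A\setminus B$, so $(p,q)\in\overline{S_1}$; the step sends $w$ to $qm'\in F_{Y'}$, which still represents $0$ in $A/B$ and is joined to $z$ by an $S$-sequence of length $n-1$, and the inductive hypothesis finishes the argument.

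The hard part is this last claim, and the delicate point throughout is to keep track of which words lie in $F_{Y'}$ rather than merely in $F_Y$ --- exploiting that a word representing an element of $A\setminus B$ never involves the generator $0$ --- and to see that every step of an $S$-sequence that ``touches $0$'' is recorded by a relation of $S_2$ whose left side lies in $F_{Y'}$ and whose right side lies in $F_X$; the extra relation $z=\alpha_z$ is precisely what anchors the base case of the induction. The other verifications are routine manipulations of $R$-, $S_1$- and $S_2$-sequences.
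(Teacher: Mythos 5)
Your proposal is correct and follows essentially the same route as the paper: verify that $A$ satisfies the relations, dispose of the case where $w_1$ represents an element of $A\setminus B$ via $S_1$, and otherwise reduce both words to $F_X$ by tracking an $S$-sequence towards $z$ until the first step that is not an $S_1$-step, at which point a relation of $S_2$ (or $z=\alpha_z$) lands the word in $F_X$ so that $R$ finishes the job. Your induction on the length of the $S$-sequence is just an unrolled version of the paper's ``minimal index $i$'' argument, and your extra care about which words lie in $F_{Y'}$ versus $F_Y$ is a correct filling-in of details the paper leaves implicit.
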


\begin{proof}
We first claim that if an element $w\in F_{Y^{\prime}}$ represents an element of $B$ in $A$,
then there exists $w^{\prime}\in F_X$ such that $w=w^{\prime}$ is a consequence of relations from $S_1$ and $S_2$.
Indeed, we have that $w=z$ holds in $A/B$, so $w=z$ is a consequence of $S$; that is, there exists an $S$-sequence 
$$w=p_1m_1, q_1m_1=p_2m_2, \dots, q_km_k=z,$$
where $(p_i, q_i)\in\overline{S}$ and $m_i\in M$ for $1\leq i\leq k$.
If all $(p_i, q_i)\in\overline{S_1}$, then $w=\alpha_z$ is a consequence of $S_1$ and $z=\alpha_z$.
Otherwise, we take $(p_i, q_i)\in\overline{S\setminus S_1}$ with $i$ minimal, so $w=p_im_i$ is a consequence of $S_1$, and we obtain $\alpha_{p_i}m_i$ from $p_im_i$ by an application of a relation from $S_2$.\par
We shall now show that $A$ is defined by the presentation $\langle X, Y^{\prime}\,|\,R, S_1, S_2\rangle$.\par
It is clear that $A$ satisfies $R$, $S_1$ and $S_2$.
Let $w_1, w_2 \in F_{X\cup {Y^{\prime}}}$ be such that $w_1=w_2$ in $A$.
If $w_1$ represents an element of $A\setminus B$, then $w_1=w_2$ is a consequence of $S_1$.
Now suppose that $w_1$ represents an element of $B$.
Using the above claim, if necessary, we have $w_1^{\prime}, w_2^{\prime}\in F_X$ such that $w_1=w_1^{\prime}$ and $w_2=w_2^{\prime}$ are consequences of $S_1$ and $S_2$ (if $w_i\in F_X$, simply let $w_i\equiv w_i^{\prime}$).
But then $w_1^{\prime}=w_2^{\prime}$ holds in $B$, so it is a consequence of $R$.  
Hence, we have that $w_1=w_2$ is a consequence of $R, S_1$ and $S_2$.
\end{proof}

\begin{corollary}
Let $M$ be a monoid, let $A$ be an $M$-act and let $B$ be a subact of $A$.  
If $B$ and $A/B$ are finitely presented, then $A$ is finitely presented.
\end{corollary}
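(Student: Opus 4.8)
The plan is a direct application of Theorem 4.3. Since $B$ is finitely presented, fix a finite presentation $\langle X\,|\,R\rangle$ for $B$; since $A/B$ is finitely presented, fix a finite presentation $\langle Y\,|\,S\rangle$ for $A/B$, and put $Y^{\prime}=Y\setminus\{0\}$. By Theorem 4.3, $A$ is defined by the presentation $\langle X, Y^{\prime}\,|\,R, S_1, S_2\rangle$, with $S_1$ and $S_2$ the sets constructed there. It therefore suffices to observe that this presentation is finite.

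The generating set $X\cup Y^{\prime}$ is finite because both $X$ and $Y$ are finite. The relation set $R$ is finite by choice, and $S_1$ is finite since $S_1\subseteq S$. The only part needing a second look is $S_2$. Although Theorem 4.3 selects an element $\alpha_w\in F_X$ for each of the (in general infinitely many) words $w\in F_{Y^{\prime}}$ representing $0$ in $A/B$, the set $S_2$ refers only to those $\alpha_u$ with $u$ the first component of some pair $(u, v)\in\overline{S}$, together with the single extra relation $z=\alpha_z$. As $S$ is finite, so is $\overline{S}$, and hence $S_2$ has at most $|\overline{S}|+1$ elements. Thus $\langle X, Y^{\prime}\,|\,R, S_1, S_2\rangle$ is a finite presentation for $A$, so $A$ is finitely presented.

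There is no real obstacle here: all the substance --- the verification that $\langle X, Y^{\prime}\,|\,R, S_1, S_2\rangle$ genuinely defines $A$ --- is already contained in Theorem 4.3, and the corollary only adds the bookkeeping that finite inputs yield a finite output. The one point to handle carefully is the dichotomy in Theorem 4.3 between the case where $A\setminus B$ is a subact of $A/B$ and the case where it is not; in the former case no word of $F_{Y^{\prime}}$ represents $0$, the elements $\alpha_w$ and $z$ do not arise, and $S_2$ is empty, which is still finite, so the argument goes through uniformly.
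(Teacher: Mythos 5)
Your proof is correct and matches the paper's intent exactly: the paper states this corollary without proof as an immediate consequence of Theorem 4.3, and your verification that the presentation $\langle X, Y^{\prime}\,|\,R, S_1, S_2\rangle$ is finite (in particular that $S_2$ is bounded by $|\overline{S}|+1$ and that the degenerate case where $A\setminus B$ is a subact causes no trouble) is precisely the bookkeeping being left to the reader.
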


\section{\large{Unions}\nopunct}

\noindent In this section we consider presentations for unions of acts.
A union of acts can be of one of two types: disjoint or amalgamated.
An {\em amalgamated union} of $M$-acts is a union of a family of $M$-acts intersecting pairwise in a common subact.
We only consider the union of two acts, although the results of this section can easily be generalised to any finite number of acts.
Throughout the section we aim to prove our results in the general setting where $C=A\cup B$ is an $M$-act with $A$ and $B$ subacts, and $A\cap B$ is potentially non-empty.
Each of those results will typically have an immediate corollary for disjoint unions, which we state separately immediately after.\par
The main purpose of the section is to explore under what conditions we have $C=A\cup B$ is finitely generated (resp. finitely presented) if and only if $A$ and $B$ are finitely generated (resp. finitely presented), 
and to provide interesting examples to demonstrate that this does not occur in general.
We begin by considering finite generation.

\begin{lemma}
Let $M$ be a monoid, and let $C=A\cup B$ be an $M$-act with $A$ and $B$ subacts of $C$.  
If $A$ and $B$ are finitely generated, then $C$ is finitely generated.
\end{lemma}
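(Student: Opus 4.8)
The plan is to exhibit an explicit finite generating set for $C$ built from finite generating sets for $A$ and $B$. Since $A$ and $B$ are finitely generated, fix finite sets $U\subseteq A$ and $V\subseteq B$ with $A=\langle U\rangle$ and $B=\langle V\rangle$. The natural candidate is $U\cup V$, which is finite; it remains only to check it generates $C$ as an $M$-act.

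First I would take an arbitrary element $c\in C=A\cup B$ and split into the two (not necessarily exclusive) cases $c\in A$ and $c\in B$. If $c\in A$, then by definition of a generating set there exist $u\in U$ and $m\in M$ with $c=um$, and $u\in U\subseteq U\cup V$. Symmetrically, if $c\in B$ then $c=vm$ for some $v\in V\subseteq U\cup V$ and $m\in M$. In either case $c$ lies in $\langle U\cup V\rangle$, so $C=\langle U\cup V\rangle$, and since $U\cup V$ is finite, $C$ is finitely generated.

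There is essentially no obstacle here: the only point worth noting is that $M$-act generation is a purely local condition, in the sense that membership of an element in the subact generated by a set is witnessed by a single generator together with a single monoid element, so it is automatically compatible with taking unions. The same argument clearly yields the immediate corollary for disjoint unions, and generalises verbatim to any finite union of subacts.
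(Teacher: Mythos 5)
Your proof is correct and is essentially the paper's own argument: the paper's proof is the one-line observation that if $A=\langle X\rangle$ and $B=\langle Y\rangle$ then $C=\langle X\cup Y\rangle$, and you have simply spelled out the routine case check behind that claim. Nothing further is needed.
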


\begin{proof}
If $A=\langle X\rangle$ and $B=\langle Y\rangle$, then $C=\langle X\cup Y\rangle$.
\end{proof}

In the following example, we show that the converse to Lemma 5.1 does not hold in general by constructing a finitely generated monoid $M$ and right ideals $A$ and $B$ of $M$ such that $C=A\cup B$ is finitely generated (in fact, finitely presented) but neither $A$ nor $B$ are finitely generated.

\begin{ex}
Let $M=\{a, b\}^{\ast}$.
Let $X=\{a^ib : i\geq 0\}$ and $Y=\{b^ia : i\geq 0\}$, and let $A$ and $B$ be the right ideals generated by $X$ and $Y$ respectively.
It is clear that $A$ and $B$ are not finitely generated.
We have that $C=A\cup B$ is generated by the set $\{a, b\}$ and is free with respect to this generating set, so $C$ is finitely presented.
\end{ex}

\begin{lemma}
Let $M$ be a monoid, let $C=A\cup B$ be an $M$-act with $A$ and $B$ subacts of $C$, and suppose that $A\cap B$ is either empty or finitely generated.  
If $C$ is finitely generated, then both $A$ and $B$ are finitely generated.
\end{lemma}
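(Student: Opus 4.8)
The plan is to take a finite generating set $Z$ for $C$, split it according to which of $A$, $B$ its elements lie in, and show that the part lying in $A$ together with a finite generating set for $A\cap B$ (when the latter is non-empty) generates $A$; the argument for $B$ is then identical by symmetry.

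First I would write $C=\langle Z\rangle$ with $Z$ finite, and put $Z_A=Z\cap A$ and $Z_B=Z\cap B$. Since $C=A\cup B$, every element of $Z$ lies in $A$ or in $B$, so $Z=Z_A\cup Z_B$ (the two sets need not be disjoint, and one of them could even be empty). The key observation, which is where the hypothesis on $A\cap B$ will enter, is the following: if $a\in A$ and $a=zm$ for some $z\in Z$ and $m\in M$ with $z\notin A$, then $z\in Z_B\subseteq B$, hence $a=zm\in B$ because $B$ is a subact, and therefore $a\in A\cap B$. Consequently every element of $A$ either lies in $\langle Z_A\rangle$ or lies in $A\cap B$.

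Now I would split into the two cases of the hypothesis. If $A\cap B=\emptyset$, then the observation gives $A\subseteq\langle Z_A\rangle$, and since $Z_A\subseteq A$ and $A$ is a subact we have $\langle Z_A\rangle\subseteq A$, so $A=\langle Z_A\rangle$ is finitely generated. If instead $A\cap B$ is finitely generated, say $A\cap B=\langle W\rangle$ with $W$ finite, then $W\subseteq A\cap B\subseteq A$ and $Z_A\subseteq A$, so $\langle Z_A\cup W\rangle\subseteq A$; conversely, by the observation every $a\in A$ lies in $\langle Z_A\rangle\cup\langle W\rangle\subseteq\langle Z_A\cup W\rangle$, whence $A=\langle Z_A\cup W\rangle$ is finitely generated. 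Interchanging the roles of $A$ and $B$ (and of $Z_A$ and $Z_B$) yields that $B$ is finitely generated as well.

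I do not expect a genuine obstacle here: the whole content is the observation that a generator lying in $B\setminus A$ can only produce elements of $A$ that already lie in $A\cap B$, so that once $A\cap B$ is under control (finitely generated, or vacuous) the finitely many generators of $C$ inside $A$ suffice. The only minor point to keep an eye on is that $Z_A$ (or $Z_B$) may be empty, but this causes no difficulty since $\langle Z_A\cup W\rangle$ is still a finite generating set, and in the case $A\cap B=\emptyset$ the non-emptiness of the $M$-act $A$ forces $Z_A\neq\emptyset$.
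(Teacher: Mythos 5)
Your proposal is correct and follows essentially the same route as the paper: both split a finite generating set $Z$ of $C$ according to membership in $B$ (the paper uses $Z\setminus B$, you use $Z\cap A$, which differ only trivially since $Z\subseteq A\cup B$), observe that a generator lying in $B$ can only produce elements of $A$ that lie in $A\cap B$, and then adjoin a finite generating set for $A\cap B$ when it is non-empty. No gap.
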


\begin{proof}
If $A\cap B=\emptyset$, let $U=\emptyset$; otherwise, let $A\cap B=\langle U\rangle$ where $U$ is finite.
Suppose that $C=\langle Z\rangle$.
Let $Y=Z\setminus B$ and let $X=Y\cup U$.  
Let $a\in A$.  If $a\in A\setminus B$, then $a=ym$ for some $y\in Y$ and $m\in M$.
If $a\in A\cap B$, then $a=um$ for some $u\in U$ and $m\in M$.  Therefore, we have that $A=\langle X\rangle$.
Hence, if $Z$ is finite, $A$ is finitely generated, and by symmetry so is $B$.
\end{proof}

\begin{corollary}
Let $M$ be a monoid, and let $A$ and $B$ be disjoint $M$-acts.  Then $A\cup B$ is finitely generated if and only if both $A$ and $B$ are finitely generated.
\end{corollary}

We now turn our attention to finite presentability.  
We begin by giving a general presentation for $C=A\cup B$, 
and we then immediately derive a corollary that gives a sufficient condition for $C$ to be finitely presented.

\begin{thm}
Let $M$ be a monoid, and let $C=A\cup B$ be an $M$-act with $A$ and $B$ subacts of $C$.
Let $A$ and $B$ have presentations $\langle X\,|\,R\rangle$ and $\langle Y\,|\,S\rangle$ respectively.
If $A\cap B\neq\emptyset$, let $U$ be a generating set for $A\cap B$; otherwise, let $U=\emptyset$.
For each $u\in U$, choose $\rho_X(u)\in F_X$ and $\rho_Y(u)\in F_Y$ which both represent $u$ in $C$, and define a set
$$T=\{\rho_X(u)=\rho_Y(u): u\in U\}.$$
Then $C$ is defined by the presentation $\langle X, Y\,|\,R, S, T\rangle$.
\end{thm}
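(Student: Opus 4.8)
The plan is to apply Proposition 3.4 to the presentation $\langle X, Y\mid R, S, T\rangle$, noting first that $X\cup Y$ generates $C$ (since $X$ generates $A$, $Y$ generates $B$, and $C=A\cup B$). The structural fact that makes everything work is that every element of $F_{X\cup Y}$ has the form $z\cdot m$ with $z\in X\cup Y$ and $m\in M$, and so lies in $F_X$ or in $F_Y$; thus $F_{X\cup Y}=F_X\cup F_Y$. Condition (1) of Proposition 3.4 is then immediate: $A$ is a subact of $C$ and satisfies $R$, so $C$ satisfies $R$; likewise $C$ satisfies $S$; and $C$ satisfies $T$, since for each $u\in U$ both $\rho_X(u)$ and $\rho_Y(u)$ were chosen to represent $u$ in $C$.

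For condition (2) the crux is the following bridging claim, which I would establish first: if $w\in F_X$ represents an element $c$ of $A\cap B$, then there is $w'\in F_Y$ which also represents $c$ and such that $w=w'$ is a consequence of $R\cup T$ (and symmetrically, interchanging the roles of $(X,R)$ and $(Y,S)$). To prove it, write $c=um$ with $u\in U$ and $m\in M$; then $\rho_X(u)m\in F_X$ represents $um=c$, so $w=\rho_X(u)m$ holds in $A$ and hence, by Lemma 3.3, is a consequence of $R$. Multiplying the relation $\rho_X(u)=\rho_Y(u)$ of $T$ on the right by $m$ shows that $\rho_X(u)m=\rho_Y(u)m$ is a consequence of $T$; and $w':=\rho_Y(u)m\in F_Y$ represents $c$. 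Concatenating, $w=w'$ is a consequence of $R\cup T$.

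With the claim in hand, take $w_1,w_2\in F_{X\cup Y}$ with $w_1=w_2$ in $C$; by the structural fact each $w_i$ lies in $F_X$ or in $F_Y$. If both lie in $F_X$, they represent elements of $A$ which coincide in $C$, hence coincide in $A$ (as $A$ is a subset of $C$), so $w_1=w_2$ holds in $A$ and, by Lemma 3.3, is a consequence of $R$; symmetrically if both lie in $F_Y$. Otherwise, say $w_1\in F_X$ and $w_2\in F_Y$; then $w_1$ represents some $a\in A$, $w_2$ represents some $b\in B$, and $a=b$ in $C$, forcing $a=b\in A\cap B$ (this case cannot arise when $A\cap B=\emptyset$, consistent with $T=\emptyset$ there). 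Applying the bridging claim to $w_1$ yields $w_1'\in F_Y$ representing $a$ with $w_1=w_1'$ a consequence of $R\cup T$; since $w_1'$ and $w_2$ both lie in $F_Y$ and represent the same element of $B$, the relation $w_1'=w_2$ is a consequence of $S$, and hence $w_1=w_2$ is a consequence of $R\cup S\cup T$. By Proposition 3.4, $\langle X,Y\mid R,S,T\rangle$ defines $C$.

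I do not expect a genuine obstacle here, since the argument is essentially a routine amalgamation computation; the point needing most care is keeping the three notions of ``represents'' consistent, namely in $A$ via $\langle X\mid R\rangle$, in $B$ via $\langle Y\mid S\rangle$, and in $C$ via the assembled presentation. This rests on the convention that $X\subseteq A$ and $Y\subseteq B$, so that the canonical maps $F_X\to A$ and $F_Y\to B$ are the restrictions of the canonical map $F_{X\cup Y}\to C$; with that in place, ``$w\in F_X$ represents $a$'' means the same whether interpreted in $A$ or in $C$, which is what licenses the appeals to Lemma 3.3.
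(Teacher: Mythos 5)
Your proposal is correct and follows essentially the same route as the paper: split into the cases $w_1,w_2\in F_X$, $w_1,w_2\in F_Y$, and the mixed case, and in the mixed case write the common value as $um$ with $u\in U$ and pass from $\rho_X(u)m$ to $\rho_Y(u)m$ via a relation of $T$. Your bridging claim is just a repackaging of the paper's chain $w_1=\rho_X(u)m=\rho_Y(u)m=w_2$, and your extra care about $F_{X\cup Y}=F_X\cup F_Y$ and condition (1) of Proposition 3.4 makes explicit what the paper leaves implicit.
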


\begin{proof}
Let $w_1, w_2 \in F_{X\cup Y}$ such that $w_1=w_2$ in $C$.\par
If $w_1, w_2 \in F_X$, then $w_1=w_2$ is a consequence of $R$.
If $w_1, w_2 \in F_Y$, then $w_1=w_2$ is a consequence of $S$.\par
Suppose now that $w_1\in F_X$ and $w_2\in F_Y$.  
Let $c=um$, with $u\in U$ and $m\in M$, be the element of $A\cap B$ that both $w_1$ and $w_2$ represent.
Since $w_1=\rho_X(u)m$ holds in $A$, it is a consequence of $R$, and likewise $w_2=\rho_Y(u)m$ is a consequence of $S$.
We also obtain $\rho_Y(u)m$ from $\rho_X(u)m$ by an application of a relation from $T$.
Therefore, we have that $w_1=w_2$ is a consequence of $R, S$ and $T$.
\end{proof}

\begin{corollary}
Let $M$ be a monoid, let $C=A\cup B$ be an $M$-act with $A$ and $B$ subacts of $C$, and suppose that $A\cap B$ is either empty or finitely generated.  
If $A$ and $B$ are finitely presented, then $C$ is finitely presented.
\end{corollary}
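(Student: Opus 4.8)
The plan is to simply apply the preceding theorem (Theorem 5.5) with finite presentations already in hand, and then check that every ingredient of the presentation it produces is finite.

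First I would, using the hypothesis, fix a finite presentation $\langle X\,|\,R\rangle$ for $A$ and a finite presentation $\langle Y\,|\,S\rangle$ for $B$. For the common subact $A\cap B$ (which is indeed a subact of $C$) I would set $U=\emptyset$ if it is empty, and otherwise choose a \emph{finite} generating set $U$ for it, which exists by assumption. For each $u\in U$ I would then pick words $\rho_X(u)\in F_X$ and $\rho_Y(u)\in F_Y$ both representing $u$ in $C$; such words exist precisely because $X$ generates $A$ and $Y$ generates $B$, and $u$ lies in both $A$ and $B$. This sets up exactly the data required by Theorem 5.5.

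Applying Theorem 5.5, $C$ is defined by the presentation $\langle X, Y\,|\,R, S, T\rangle$, where $T=\{\rho_X(u)=\rho_Y(u):u\in U\}$. Here $X$ and $Y$ are finite, $R$ and $S$ are finite, and $T$ is finite since it is indexed by the finite set $U$ (and is empty when $A\cap B=\emptyset$). Hence $\langle X, Y\,|\,R, S, T\rangle$ is a finite presentation, so $C$ is finitely presented.

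I do not expect any genuine obstacle here: the substantive work is already contained in Theorem 5.5, and the corollary merely records that feeding finitely much finite data into that construction yields a finite presentation. The only points worth stating explicitly are that the generating set $U$ for $A\cap B$ can be taken finite --- which is precisely the hypothesis --- and that the representative words $\rho_X(u)$ and $\rho_Y(u)$ exist at all, which follows from $X$ and $Y$ being generating sets for $A$ and $B$ respectively.
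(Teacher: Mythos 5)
Your proposal is correct and is exactly the argument the paper intends: the corollary is stated as an immediate consequence of Theorem 5.5, obtained by feeding in finite presentations for $A$ and $B$ and a finite generating set for $A\cap B$, so that the resulting presentation $\langle X, Y\,|\,R, S, T\rangle$ is finite. Your explicit checks that $T$ is finite and that the representatives $\rho_X(u)$, $\rho_Y(u)$ exist are the only points needing mention, and you handle both.
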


The converse to Corollary 5.6 does not hold in general.
Recall that in Example 5.2 we showed that there exists a monoid $M$ with $M$-acts $A$ and $B$ such that $C=A\cup B$ is finitely presented but neither $A$ nor $B$ are finitely generated.
We now present a more striking example:

\begin{ex}
\textit{There exists a monoid $M$ with finitely generated right ideals $A$ and $B$ such that $A\cap B$ is finitely generated and $C=A\cup B$ is finitely presented but neither $A$ nor $B$ are finitely presented}.\par 
Let $M$ be the monoid defined by the presentation
$$\langle a, b, s, t\,|\,ab^ia=aba, ba^ib=bab, sa=a, tb=b~(i\geq 2)\rangle_{\text{Mon}}.$$
We have a complete rewriting system on $X=\{a, b, s, t\}$ consisting of the rules
$$ab^ia\to aba, ~ba^ib\to bab, ~sa\to a, ~tb\to b~(i\geq 2),$$
and this yields the following set of normal forms for $M$:
$$X^{\ast}\setminus\bigl(X^{\ast}(\{ab^ia, ba^ib : i\geq 2\}\cup\{sa, tb\})X^{\ast}\bigr);$$
that is, the set of all words in $X$ which do not contain as a subword the left-hand side of one of the rewriting rules.
For more information on rewriting systems, one may consult \cite{Book} for instance.\par
Let $A$ and $B$ be the right ideals of $M$ generated by $\{a, t\}$ and $\{b, s\}$ respectively.
From the monoid presentation for $M$, we see that $A$ is defined by the infinite presentation 
$$\langle a, t\,|\,a\cdot b^ia=a\cdot ba~(i\geq 2)\rangle.$$
If $A$ were finitely presented, then it could be defined by a presentation
$$P=\langle a, t\,|\,a\cdot b^ia=a\cdot ba~(2\leq i\leq k)\rangle.$$
But if $i>k$, then the relation $a\cdot b^ia=a\cdot ba$ cannot be a consequence of the relations of $P$,
since there do not exist $m, n\in M$ such that $b^ia=mn$ in $M$ and $(a\cdot m, a\cdot ba)\in P$; in other words, no relation of $P$ can be applied to $a\cdot b^ia$.
Therefore, $A$ is not finitely presented.  Similarly, we have that $B$ is not finitely presented.  
We also that $A\cap B=\langle a, b\rangle$.
It is clear from the monoid presentation for $M$ that $C=A\cup B$ is generated by the set $\{s, t\}$ and is free with respect to this generating set, so hence $C$ is finitely presented.
\end{ex}

We now turn to consider conditions for when $C=A\cup B$ being finitely presented implies that the components $A$ and $B$ are both finitely presented.

\begin{thm}
Let $M$ be a monoid, let $C=A\cup B$ be an $M$-act with $A$ and $B$ subacts of $C$, and suppose that $A\cap B$ is either empty or finitely presented.  
If $C$ is finitely presented, then both $A$ and $B$ are finitely presented.
\end{thm}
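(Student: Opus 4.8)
The plan is to argue directly and combinatorially: one cannot simply invoke Corollary 4.4, since $A/(A\cap B)\cong C/B$ is, by Corollary 4.2, finitely presented only when the trivial $M$-act is. Instead I would present $C$ over a generating set adapted to the decomposition $C=A\cup B$ and then read off a finite presentation of $A$; the statement for $B$ follows by symmetry.

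First I would fix the generators. Since $C$ is finitely presented it is finitely generated, so Lemma 5.3 (applicable because $A\cap B$ is finitely generated or empty) shows $A$ and $B$ are finitely generated. A little bookkeeping, using $A\cap B=\langle U\rangle$ to re-express generators of $B$ lying in $A\cap B$, then yields a finite generating set $X$ of $A$ containing a finite generating set $U$ of $A\cap B$ (with $U=\emptyset$ if $A\cap B=\emptyset$), together with a finite set $V\subseteq B$, disjoint from $X$, with $V\cup U$ generating $B$; thus $X\sqcup V$ generates $C$, and by Proposition 3.8 we may write $C=\langle X\sqcup V\mid\mathcal R\rangle$ with $\mathcal R$ finite. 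The structural point to record is that each element of $F_{X\sqcup V}$ has the form $z\cdot m$ with $z\in X\sqcup V$, hence lies in $F_X$ or $F_V$ according to whether $z\in X$ or $z\in V$; a word of $F_X$ represents an element of $A$ (as $X\subseteq A$ and $A$ is a subact) and a word of $F_V$ an element of $B$, so a word representing an element of $A$ can lie in $F_V$ only if that element lies in $A\cap B$.

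Next I would write down the candidate presentation $\langle X\mid\mathcal R_A\rangle$. Fix a finite presentation $\langle U\mid P\rangle$ of $A\cap B$ (omitting it when $A\cap B=\emptyset$), and for each word of $F_V$ that represents an element of $A\cap B$ and occurs as a side of some relation of $\mathcal R$, fix once and for all a word of $F_U$ representing the same element. Let $\mathcal R_A$ consist of $P$ together with, for every $(a,b)\in\mathcal R$ both of whose sides represent elements of $A$, the relation obtained from $(a,b)$ by replacing any side lying in $F_V$ by its chosen $F_U$-representative. Then $\mathcal R_A$ is a finite set of relations over $F_X$, all valid in $A$. By Proposition 3.4 it suffices to show that whenever $w_1,w_2\in F_X$ satisfy $w_1=w_2$ in $A$, the relation $w_1=w_2$ is a consequence of $\mathcal R_A$. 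Let $c$ be the common element and take an $\mathcal R$-sequence from $w_1$ to $w_2$ in $C$, each term of which represents $c$. If $c\in A\setminus B$, no term of the sequence can lie in $F_V$, so the sequence is already an $\mathcal R_A$-sequence. If $c\in A\cap B$, I reduce to the claim below; granting it, $w_1=\beta_1$ and $w_2=\beta_2$ are consequences of $\mathcal R_A$ for some $\beta_1,\beta_2\in F_U$, while $\beta_1=\beta_2$ holds in $A\cap B$ and hence is a consequence of $P\subseteq\mathcal R_A$, and chaining these yields $w_1=w_2$.

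The crux is the claim: \emph{if $w\in F_X$ represents an element of $A\cap B$, then $w=\beta$ is a consequence of $\mathcal R_A$ for some $\beta\in F_U$}. Informally it says that a consequence-sequence in $C$ which temporarily leaves $F_X$ for $F_V$ can be short-circuited using only the relations of $\mathcal R_A$. I would prove it by induction on the length $k$ of an $\mathcal R$-sequence $w=p_1m_1,\ q_1m_1=p_2m_2,\ \dots,\ q_km_k=\beta$ in $C$, where $\beta\in F_U$ is fixed. If $k=0$ then $w\equiv\beta\in F_U$. If $k\geq1$, then since $w=p_1m_1\in F_X$ the generator of $p_1$ lies in $X$, so $p_1\in F_X$; hence $p_1$ and $q_1$ both represent elements of $A$, and the relation of $\overline{\mathcal R_A}$ associated with $(p_1,q_1)$ is applicable to $w$. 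If $q_1\in F_X$, applying it carries $w$ to $q_1m_1\in F_X$, to which the inductive hypothesis applies via the tail of the sequence (of length $k-1$); if $q_1\in F_V$, applying it carries $w$ directly to $\beta_{q_1}m_1\in F_U$, and we stop. This proves the claim, hence the theorem for $A$, and by symmetry for $B$. I expect the delicate points to be only the initial choice of $X,U,V$ with the required disjointness and generation properties, and checking that the ``modified'' relations of $\mathcal R_A$ genuinely hold in $A$ — both routine once the structural observation above is in place.
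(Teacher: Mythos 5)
Your proof is correct and follows essentially the same route as the paper's: both present $C$ over generators split between an $A$-side and a $B$-side (with generators of $A\cap B$ adjoined), replace the $B$-side occurrences in crossing relations by chosen $F_U$-representatives, short-circuit a relation-sequence at the first step that would leave $F_X$, and finish using the finite presentation of $A\cap B$. The only cosmetic difference is that you re-present $C$ over a generating set already containing $U$ (via Proposition 3.8), whereas the paper keeps the original generators $Z$ of $C$ and instead adds bridging relations $u=w_u$ tying each $u\in U$ to a word over $Z\setminus B$.
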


\begin{proof}
It clearly suffices to show that $A$ is finitely presented.
If $A\cap B=\emptyset$, let $U=\emptyset$; otherwise, let $A\cap B$ be defined by a finite presentation $\langle U\,|\,S\rangle$.
Suppose $C$ is defined by a finite presentation $\langle Z\,|\,R\rangle$.  
Let $Y=Z\setminus B$ and $X=Y\cup U$. As in the proof of Lemma 5.3, we have that $A=\langle X\rangle$.  
Also, let $Y^{\prime}=Z\cap B$.\par
For each $w\in F_Z$ such that $w$ represents an element of $A\cap B$, choose $\rho_U(w)\in F_U$ which represents the same element of $A\cap B$.
Also, for each $u\in U$, choose $w_u\in F_Z$ such that $w_u$ represents $u$.
We now define the following sets:
\begin{align*}
R_1&=\{(u, v)\in R : u, v\in F_Y\};\\
R_2&=\{u=w_u : u\in U, w_u\in F_Y\};\\
R_3&=\{u=\rho_U(u) : u\in F_Y, (u, v)\in\overline{R}\emph{ for some }v\in F_{Y^{\prime}}\}.
\end{align*}
Note that the set $R_2$ may be empty (if $w_u\in F_{Y^{\prime}}$ for every $u\in U$); however, this does not affect the argument that follows.
We make the following claim:
\begin{claim*}
If an element $w\in F_Y$ represents an element of $A\cap B$, 
then there exists $w^{\prime}\in F_U$ such that $w=w^{\prime}$ is a consequence of relations from $R_1, R_2$ and $R_3$.
\end{claim*}
\begin{proof}
Let $w\in F_Y$ represent an element $c\in A\cap B$.
Now $c=um$ for some $u\in U$ and $m\in M$.
Since $w=w_um$ holds in $C$, it is a consequence of $R$, so there exists an $R$-sequence
$$w=p_1m_1, q_1m_1=p_2m_2, \dots, q_km_k=w_um,$$
where $(p_i, q_i)\in\overline{R}$ and $m_i\in M$ for $1\leq i\leq k$.
If each $(p_i, q_i)\in\overline{R_1}$, then $w=w_um$ is a consequence of $R_1$,
and we obtain $u\cdot m$ from $w_um$ by an application of a relation from $R_2$.
Otherwise, there exists $i$ minimal such that $p_i\in F_Y$ and $q_i\in F_{Y^{\prime}}$, 
so $w=p_im_i$ is a consequence of $R_1$, and we obtain $\rho_U(p_i)m_i$ from $p_im_i$ by an application of a relation from $R_3$.
\end{proof}
Returning to the proof of Theorem 5.8, we shall show that $A$ is defined by the finite presentation $\langle X\,|\,R_1, R_2, R_3, S\rangle$.\par
Let $w_1, w_2\in F_X$ such that $w_1=w_2$ holds in $A$.
If $w_1$ represents an element of $A\setminus B$, then $w_1=w_2$ is a consequence of $R_1$.
Suppose $w_1$ represents an element of $A\cap B$.
Using the claim above, if necessary, we have $w_1^{\prime}$, $w_2^{\prime}\in F_U$ such that $w_1=w_1^{\prime}$ and $w_2=w_2^{\prime}$ are consequences of $R_1, R_2$ and $R_3$ (if $w_i\in F_U$, simply let $w_i\equiv w_i^{\prime}$).
Since $w_1^{\prime}=w_2^{\prime}$ holds in $A\cap B$, we have that $w_1^{\prime}=w_2^{\prime}$ is a consequence of $S$.
Therefore, $w_1=w_2$ is a consequence of $R_1, R_2, R_3$ and $S$.
\end{proof}

\begin{corollary}
Let $M$ be a monoid, and let $A$ and $B$ be disjoint $M$-acts.  Then $A\cup B$ is finitely presented if and only if both $A$ and $B$ are finitely presented.
\end{corollary}

We now investigate how finite presentability of $C=A\cup B$ affects the intersection $A\cap B$.

\begin{prop}
Let $M$ be a monoid, let $C=A\cup B$ be an $M$-act with $A$ and $B$ subacts of $C$, and suppose that $A\cap B$ is non-empty.  
If $C$ is finitely presented and both $A$ and $B$ are finitely generated, then $A\cap B$ is finitely generated.
\end{prop}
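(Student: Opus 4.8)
The plan is to read off an explicit finite generating set for $A\cap B$ from a finite presentation of $C$ taken with respect to a carefully prepared generating set. Since $A$ and $B$ are finitely generated, I would fix finite generating sets $P$ for $A$ and $Q$ for $B$, let $X$ and $Y$ be \emph{disjoint} sets carrying bijections onto $P$ and $Q$ respectively, and let $\theta\colon F_{X\cup Y}\to C$ be the $M$-homomorphism induced by these bijections (Proposition 2.5); it is surjective because $P\cup Q$ generates $C$ by Lemma 5.1. Since $C$ is finitely presented, $C$ is then defined by a finite presentation $\langle X\cup Y\,|\,R\rangle$ with respect to these generators (Remark 3.1 together with Proposition 3.9). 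The point of using disjoint copies is that $F_{X\cup Y}$ is the disjoint union $F_X\sqcup F_Y$, with $\theta$ mapping $F_X$ onto exactly $A$ and $F_Y$ onto exactly $B$; in particular every element of $F_{X\cup Y}$ lies in precisely one of $F_X$, $F_Y$, and an element of $F_X$ of the form $w\ell$ with $\ell\in M$ forces $w\in F_X$.

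Next I would call a pair $(u,v)\in\overline{R}$ \emph{crossing} if $u\in F_X$ and $v\in F_Y$, so that $u\theta\in A$, $v\theta\in B$ and $u\theta=v\theta$, whence $u\theta\in A\cap B$. Put
$$V=\{\,u\theta : (u,v)\in\overline{R}\text{ is crossing}\,\},$$
a finite subset of $A\cap B$; the claim to prove is $A\cap B=\langle V\rangle$. The inclusion $\langle V\rangle\subseteq A\cap B$ is immediate since $A\cap B$ is a subact. For the reverse, take $c\in A\cap B$. Since $c\in A=\langle P\rangle$, it is represented by some $x\cdot m$ with $x\in X$, $m\in M$; since $c\in B=\langle Q\rangle$, it is also represented by some $y\cdot n$ with $y\in Y$, $n\in M$. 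These represent the same element of $C$ but are not identical in $F_{X\cup Y}$ (as $F_X$ and $F_Y$ are disjoint), so by Lemma 3.3 there is a non-trivial $R$-sequence $x\cdot m=p_1\ell_1,\ q_1\ell_1=p_2\ell_2,\ \dots,\ q_k\ell_k=y\cdot n$ with $(p_i,q_i)\in\overline{R}$ and $\ell_i\in M$, all of whose terms represent $c$. The initial term lies in $F_X$ and the final term $q_k\ell_k=y\cdot n$ lies in $F_Y$, so there is a least index $i\geq1$ with $q_i\ell_i\in F_Y$; then the preceding term $p_i\ell_i$ lies in $F_X$, forcing $p_i\in F_X$ and $q_i\in F_Y$. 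Hence $(p_i,q_i)$ is crossing, $p_i\theta\in V$, and $c=(p_i\ell_i)\theta=(p_i\theta)\ell_i\in\langle V\rangle$, as required.

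I expect the genuine content to lie in the setup rather than the combinatorics: one must work with disjoint formal copies of generating sets of $A$ and $B$ rather than inside a single generating set of $C$, because it is exactly the availability of a representative of $c$ as a word in the $Y$-generators — even though $c\in A$ — that forces the connecting $R$-sequence to cross from $F_X$ into $F_Y$ and thereby exhibits a generator of $A\cap B$. Everything else (that $\theta$ maps $F_X$ onto $A$ and $F_Y$ onto $B$, that all terms of an $R$-sequence represent the same element, that applying a relation cannot change membership in $F_X$ versus $F_Y$ except via a crossing relation) is routine from the explicit description of free acts as $X\times M$. Finally, finite presentability of $C$ is used only to ensure that $R$, and hence $V$, can be taken finite.
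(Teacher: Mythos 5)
Your proposal is correct and follows essentially the same route as the paper: present $C$ finitely on the union of finite generating sets for $A$ and $B$, take as generators of $A\cap B$ the images of the left-hand sides of relations crossing from $F_X$ to $F_Y$, and locate such a crossing in an $R$-sequence joining an $F_X$-representative to an $F_Y$-representative of an arbitrary $c\in A\cap B$. Your insistence on disjoint formal copies of the two generating sets is a slightly more careful bookkeeping of a point the paper leaves implicit, but the argument is the same.
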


\begin{proof}
Let $A$ and $B$ be generated by finite sets $X$ and $Y$ respectively.
Since $C$ is finitely presented, it can be defined by a finite presentation $\langle X, Y\,|\,R\rangle$.\par
For any $w\in F_X$, let $\overline{w}$ denote the element of $A$ which $w$ represents, and define
$$U=\{\overline{u} : u\in F_X, (u, v)\in\overline{R}\text{ for some } v\in F_Y\}\subseteq A\cap B.$$
Let $c\in A\cap B$.  Choose $w_1\in F_X$ and $w_2\in F_Y$ which both represent the element $c$.
Since $w_1=w_2$ holds in $C$, it is a consequence of $R$, so there exists an $R$-sequence
$$w_1=p_1m_1, q_1m_1=p_2m_2, \dots, q_km_k=w_2,$$
where $(p_i, q_i)\in\overline{R}$ and $m_i\in M$ for $1\leq i\leq k$.
Now, there exists $i\in\{1, \dots, k\}$ such that $p_i\in F_X$ and $q_i\in F_Y$,
and we have that $c=\overline{p_i}m_i$, so $c\in\langle U\rangle$.  
Hence, we have that $A\cap B=\langle U\rangle$, so $A\cap B$ is finitely generated.
\end{proof}

\begin{corollary}
Let $M$ be a monoid, and let $C=A\cup B$ be an $M$-act with $A$ and $B$ finitely presented subacts of $C$ and $A\cap B$ non-empty.
Then $C$ is finitely presented if and only if $A\cap B$ is finitely generated.
\end{corollary}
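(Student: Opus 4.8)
The plan is to obtain this corollary as a direct synthesis of the two preceding results, namely Corollary 5.6 and Proposition 5.11; no new machinery should be needed.

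For the forward implication, suppose $C$ is finitely presented. Since $A$ and $B$ are finitely presented, each admits a finite presentation, and in particular the (finite) generating set of such a presentation witnesses that $A$ and $B$ are finitely generated. As $A\cap B$ is assumed non-empty, the hypotheses of Proposition 5.11 are all satisfied, and that proposition immediately yields that $A\cap B$ is finitely generated.

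For the reverse implication, suppose $A\cap B$ is finitely generated. Then $A\cap B$ is in particular ``empty or finitely generated'' in the sense of Corollary 5.6, and since $A$ and $B$ are finitely presented, that corollary gives at once that $C=A\cup B$ is finitely presented.

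I do not anticipate any genuine obstacle here: the content has already been done in Proposition 5.11 and Corollary 5.6, and the only thing to check is the trivial fact that a finitely presented act is finitely generated (so that Proposition 5.11 is applicable in the forward direction). One should just be mindful to state explicitly that the non-emptiness of $A\cap B$ is what is used to feed into Proposition 5.11, and that finite generation of $A\cap B$ is precisely the hypothesis needed to invoke Corollary 5.6.
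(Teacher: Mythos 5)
Your proposal is correct and matches the paper's intended argument exactly: the corollary is stated without proof precisely because it follows by combining the preceding proposition (numbered 5.10 in the paper, not 5.11) for the forward direction with Corollary 5.6 for the converse, together with the trivial observation that finitely presented acts are finitely generated. The only slip is the label --- the result you call ``Proposition 5.11'' is Proposition 5.10 --- but the mathematics is exactly right.
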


\begin{remark}
In the catagorical sense, the disjoint union of acts is a {\em coproduct} and the amalgamated union of acts is a {\em pushout}.
In the category of groups, the coproduct is called the {\em free product} and the pushout is called the {\em free product with amalgamation}.
Notice the similarity between Corollary 5.11 and a well-known result, due to G. Baumslag, 
which states that for two finitely presented groups $G_1$ and $G_2$ such that $H=G_1\cap G_2$ is a group, 
the amalgamated free product $G=G_1\ast_H G_2$ is finitely presented if and only if $H$ is finitely generated;
see \cite[Chapter 6]{Baumslag} for more details.
\end{remark}

Given the results concerning finite presentability in this section, the following two questions arise:
Do there exist monoids $M$ and $M$-acts $A$, $B$ and $C$ with $C=A\cup B$, such that:
\begin{enumerate}
\item $A$ and $B$ are finitely presented but $C$ is not finitely presented?
\item $A$ and $B$ are finitely presented, while $A\cap B$ is finitely generated but not finitely presented?
\end{enumerate}
In the following, we exhibit examples which provide positive answers to both of the above two questions.

\begin{ex}
\textit{There exists a monoid $M$ with finitely presented right ideals $A$ and $B$ such that $C=A\cup B$ is not finitely presented}.\par 
Let $M$ be the monoid defined by the presentation
$$\langle a, b, c\,|\,ac^ia=bc^{i-1}b~(i\geq 2)\rangle_{\text{Mon}}.$$
We have a complete rewriting system on $X=\{a, b, c\}$ consisting of the rules
$$ac^ia\to bc^{i-1}b~(i\geq 2),$$
and this yields the following set of normal forms for $M$:
$$X^{\ast}\setminus(X^{\ast}\{ac^ia : i\geq 2\}X^{\ast}).$$
Let $A$ and $B$ be the right ideals of $M$ generated by $\{a\}$ and $\{b\}$ respectively.
We have that $A$ and $B$ are free $M$-acts and hence finitely presented. 
Let $X=\{ac^ia : i\geq 2\}$.
It is clear from the monoid presentation for $M$ that $X$ generates $A\cap B$,
and that this is a minimal generating set for $A\cap B$, so $A\cap B$ is not finitely generated.
It now follows from Corollary 5.11 that $C=A\cup B$ is not finitely presented.
\end{ex}

\begin{ex}
\textit{There exists a monoid $M$ with finitely presented right ideals $A$ and $B$ such that $A\cap B$ is finitely generated but not finitely presented}.\par 
Let $M$ be the monoid defined by the presentation
$$\langle a, b, c\,|\,a^2=a, cab=ab, ab^ia=aba~(i\geq 2)\rangle_{\text{Mon}}.$$
We have a complete rewriting system on $X=\{a, b, c\}$ consisting of the rules
$$a^2\to a, ~cab\to ab, ~ab^ia\to aba~(i\geq 2),$$
and this yields the following set of normal forms for $M$:
$$X^{\ast}\setminus\bigl(X^{\ast}(\{ab^ia : i\geq 2\}\cup\{a^2, cab\})X^{\ast}\bigr).$$
Let $A$ and $B$ be the right ideals of $M$ generated by $\{a\}$ and $\{c\}$ respectively.
From the monoid presentation for $M$, we see that $B$ is a free $M$-act (and hence finitely presented), 
that $A$ is defined by the infinite presentation 
$$\langle a\,|\,a\cdot a=a, a\cdot b^ia=a\cdot ba~(i\geq 2)\rangle,$$
and that $A\cap B$ is defined by the infinite presentation 
$\langle y\,|\,y\cdot b^ia=y\cdot a~(i\in\mathbb{N})\rangle$, where $y$ respesents $ab$.  
We claim that $A$ is also defined by the finite presentation $\langle a\,|\, a\cdot a=a\rangle$.
Indeed, for any $i\geq 2$, we have
$$a\cdot b^ia=(a\cdot a)b^ia\equiv a\cdot ab^ia\equiv a\cdot aba\equiv(a\cdot a)ba\textcolor{red}{=}a\cdot ba.$$
It can be shown that $A\cap B$ is not finitely presented using a similar argument to the one in Example 5.7.\par
Note that since $A$ and $B$ are finitely presented and $A\cap B$ is finitely generated, Corollary 5.11 implies that $C=A\cup B$ is finitely presented.
In fact, it is easy to see that $C$ is defined by the finite presentation 
$$\langle a, c\,|\,a\cdot a=a, a\cdot b=c\cdot ab\rangle.$$
\end{ex}

\section{\large{Subacts}\nopunct}

\noindent In this section we consider presentations for subacts of monoid acts.
In the first part of the section we construct a general (infinite) presentation for a subact of a monoid act.
From this presentation we obtain a method for finding `nicer' presentations in special situations.
We note that for general monoids $M$, finitely generated subacts of finitely presented $M$-acts are not necessarily finitely presented.\par
In the second part of the section, we shall consider a particular case where we have a subact $B$ with finite complement in an $M$-act $A$;
we say that $B$ is {\em large} in $A$ and $A$ is a {\em small extension} of $B$.
This was motivated by the analagous concept of `large subsemigroups' within semigroup theory; see \cite{Ruskuc2} for more details.
In particular, it is shown there that various finiteness properties, including finite generation and finite presentability, are inherited by both large subsemigroups and small extensions of semigroups.
Given these results, it is natural to ask whether similar results hold in the setting of monoid acts.
We shall show that, for finitely generated monoids $M$, finite generation is inherited by both large subacts and small extensions, 
and finite presentability is also inherited by small extensions.
Somewhat surprisingly, though, there exist finitely generated monoids $M$ for which large subacts of finitely presented $M$-acts are not necessarily finitely presented.
We shall show, however, that there is a large class of monoids for which finite presentability is inherited by large subacts.\par
~\par
Let $M$ be a monoid, let $A$ be an $M$-act defined by a presentation $\langle X\,|\,R\rangle$, and let $B$ be a subact of $A$ generated by a set $Y$.
We seek a presentation for $B$ in terms of the generators $Y$.\par
For each $y\in Y$, we choose $w_y\in F_X$ which represents $y$,
and let $\psi : F_Y \to F_X$ be the unique $M$-homomorphism extending $y\mapsto w_y$.  
We call $\psi$ the {\em representation map}.
For an element $w\in F_X$ which represents an element of $B$, we have $w=x\cdot m$ for some $x\in X$ and $m\in M$, and $xm=yn$ for some $y\in Y, n \in M$, so $w=(y\psi)n$ holds in $B$.
Therefore, we have a map $\phi : L(X, B)\to F_Y$, where $L(X, B)$ denotes the set of all elements of $F_X$ which represent elements of $B$, satisfying $(w\phi)\psi=w$ in $A$ for all $w\in L(X, B)$.
We call $\phi$ a {\em rewriting map}.  Note that the existence of $\phi$ follows from the Axiom of Choice.\par
We now state our first result of this section, giving a presentation for $B$, which has analogues within group and semigroup theory; 
see \cite[Theorem 2.6]{Magnus} and \cite[Theorem 2.1]{Campbell} for more details.

\begin{thm}
Let $M$ be a monoid, let $A$ be an $M$-act defined by a presentation $\langle X\,|\,R\rangle,$ and let $B$ be a subact of $A$ generated by $Y$.
For each $y\in Y$, we choose $w_y\in F_X$ which represents $y$.
Let $\psi$ be the representation map and let $\phi$ be a rewriting map, and define the following sets of relations:
\begin{align*}
R_1&=\{y=w_y\phi : y\in Y\};\\
R_2&=\{(wm)\phi=(w\phi)m : w\in L(X, B), m\in M\};\\
R_3&=\{(um)\phi=(vm)\phi : (u, v)\in R, m\in M, um\in L(X, B)\}.
\end{align*}
Then $B$ is defined by the presentation $\langle Y\,|\,R_1, R_2, R_3\rangle$.
\end{thm}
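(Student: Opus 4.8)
The plan is to verify the two conditions of Proposition 3.4: that $B$ (generated by $Y$) satisfies all the relations in $R_1 \cup R_2 \cup R_3$, and that every relation holding in $B$ is a consequence of these. For the first part, I would check each family against the defining identity $(w\phi)\psi = w$ of the rewriting map. For $R_1$: applying $\psi$ to $y = w_y\phi$ gives $w_y = (w_y\phi)\psi$, which holds in $A$ by definition of $\phi$, and since $w_y$ represents $y$ this says the relation holds in $B$. For $R_2$ and $R_3$ one argues similarly, using that $\psi$ is an $M$-homomorphism so that $((wm)\phi)\psi = wm = ((w\phi)\psi)m = ((w\phi)m)\psi$, and likewise $((um)\phi)\psi = um = vm = ((vm)\phi)\psi$ in $A$ whenever $(u,v) \in R$; since $\psi$ maps relations that are consequences in $F_Y$ to relations holding in $A$, and these elements represent elements of $B$, the relations hold in $B$.

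The substance is the second condition. Suppose $w_1, w_2 \in F_Y$ with $w_1 = w_2$ in $B$. Applying $\psi$ gives $w_1\psi = w_2\psi$ in $A$, so by Lemma 3.3 there is an $R$-sequence in $F_X$ connecting $w_1\psi$ and $w_2\psi$, say
$$w_1\psi = p_1 m_1,\ q_1 m_1 = p_2 m_2,\ \dots,\ q_k m_k = w_2\psi,$$
with $(p_i, q_i) \in \overline{R}$ and $m_i \in M$. The key observation is that each term $p_i m_i$ (equivalently $q_i m_i$) of this sequence lies in $L(X, B)$, since it equals $w_1\psi$ in $A$ and $w_1\psi$ represents an element of $B$. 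I would then apply $\phi$ term by term: the claim is that in $F_Y$, modulo $R_2 \cup R_3$, the elements $(p_i m_i)\phi$ and $(q_i m_i)\phi$ are connected. Indeed $(p_i m_i)\phi = (p_i\phi)m_i$ and $(q_i m_i)\phi = (q_i\phi)m_i$ are each consequences of $R_2$; and applying the relation $(p_i\phi=q_i\phi) \in R_3$ (available because $(p_i,q_i)\in\overline R$ and $p_im_i\in L(X,B)$) multiplied by $m_i$ connects $(p_i\phi)m_i$ to $(q_i\phi)m_i$. Chaining these, $(w_1\psi)\phi = (p_1 m_1)\phi$ is connected to $(q_k m_k)\phi = (w_2\psi)\phi$ using only $R_2 \cup R_3$. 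Finally, to close the loop back to $w_1, w_2$ themselves, note that $w_i = (w_i\psi)\phi$ is a consequence of $R_1 \cup R_2$: write $w_i$ as $y \cdot m$ (or a product built from such), use the relation $y = w_y\phi$ from $R_1$ to pass to $(w_y\phi)m = (w_y m)\phi$ via $R_2$, and observe $w_y m = (y\cdot m)\psi = w_i\psi$; more carefully this needs an induction on the structure of $w_i$ (it has the form $y\cdot m$ for a single $y \in Y$ and $m \in M$, so a single application of $R_1$ and $R_2$ suffices). Combining the three pieces, $w_1 = w_2$ is a consequence of $R_1, R_2, R_3$.

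The main obstacle I anticipate is the bookkeeping around $L(X,B)$ and the rewriting map: one must be careful that every intermediate word in the $R$-sequence genuinely represents an element of $B$ (so that $\phi$ is defined on it) — this is true here because all terms are equal in $A$ to $w_1\psi \in L(X,B)$, but it is the point where the argument would break for a subset of $A$ that is not a subact. A secondary subtlety is verifying that the relation $w_i = (w_i\psi)\phi$ reduces cleanly via $R_1$ and $R_2$; since an element of $F_Y$ has the unique form $y \cdot m$ with $y\in Y$, this is just one application of each, but it should be spelled out. Everything else is routine manipulation with $M$-homomorphisms and the consequence relation.
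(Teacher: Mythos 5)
Your overall strategy matches the paper's: verify that $B$ satisfies the relations by applying $\psi$, take an $R$-sequence connecting $w_1\psi$ and $w_2\psi$, push it through $\phi$ using $R_3$, and close up with $w_i=(w_i\psi)\phi$ via one application each of $R_1$ and $R_2$. The opening and closing parts are correct as written, and you rightly observe that every term $p_im_i$ of the $R$-sequence lies in $L(X,B)$ because it equals $w_1\psi$ in $A$.

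There is, however, a genuine error in the middle step. You connect $(p_im_i)\phi$ to $(q_im_i)\phi$ by first passing to $(p_i\phi)m_i$ and $(q_i\phi)m_i$ ``via $R_2$'' and then applying the relation $p_i\phi=q_i\phi$ multiplied by $m_i$. This requires $p_i$ and $q_i$ themselves to lie in $L(X,B)$: the relations of $R_2$ have the form $(wm)\phi=(w\phi)m$ only for $w\in L(X,B)$, and $\phi$ is not even defined on elements of $F_X$ outside $L(X,B)$; likewise $p_i\phi=q_i\phi$ is the $R_3$ relation with $m=1$, present only when $p_i\in L(X,B)$. But membership of $p_im_i$ in $L(X,B)$ does not imply membership of $p_i$: the element represented by $p_i$ may lie in $A\setminus B$ while its translate by $m_i$ lands in $B$. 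This is exactly why $R_3$ is indexed by pairs $(u,v)\in R$ \emph{together with} $m\in M$ such that $um\in L(X,B)$, rather than consisting of relations $u\phi=v\phi$ to be multiplied afterwards. The repair is immediate and is what the paper does: the relation $(p_im_i)\phi=(q_im_i)\phi$ is already an element of $R_3$ (or of $\overline{R_3}$ when $(p_i,q_i)\in\overline{R}\setminus R$); apply it directly with multiplier $1$ and chain along the sequence, using that $q_im_i$ and $p_{i+1}m_{i+1}$ are identical elements of $F_X$ and hence have identical images under $\phi$. With this one correction your argument coincides with the paper's proof.
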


\begin{proof}
We first show that $B$ satisfies $R_1$, $R_2$ and $R_3$.  
This amounts to showing that $u\psi=v\psi$ holds in $A$ for each $u=v$ in $R_1, R_2$ and $R_3$.\par
For each $y\in Y$, we have $y\psi\equiv w_y=(w_y\phi)\psi$ holds in $A$, since $w_y\in L(X, B)$.
For any $w\in L(X, B)$ and $m\in M$, we have
$$((wm)\phi)\psi=wm=((w\phi)\psi)m\equiv ((w\phi)m)\psi$$
holds in $A$.  Finally, for any $(u, v)\in R, m\in M$ such that $um\in L(X, B)$, we have
$$((um)\phi)\psi=um=vm=((vm)\phi)\psi$$
holds in $A$.\par
We now claim that for any $w\in F_Y$, we have that $w=(w\psi)\phi$ is a consequence of $R_1$ and $R_2$.
Indeed, we have $w\equiv y\cdot m$ for some $y\in Y$ and $m \in M$, so $w\psi\equiv w_ym$.
We obtain $(w_y\phi)m$ from $w$ by an application of the relation $y=w_y\phi$,
and since $w_y\in L(X, B)$, we have that $(w\psi)\phi=(w_y\phi)m$ is a relation from $R_2$.\par
Now let $w_1, w_2\in F_Y$ be such that $w_1=w_2$ holds in $B$. 
Since $w_1\psi=w_2\psi$ holds in $A$, it is a consequence of $R$, so we have an $R$-sequence 
$$w_1\psi=p_1m_1, q_1m_1=p_2m_2, \dots, q_km_k=w_2,$$
where $(p_i, q_i)\in R$ and $m_i\in M$ for $1\leq i\leq k$.  
For each $i\in\{1, \dots, k\}$, we have $p_im_i\in L(X, B)$, so $(w_1\psi)\phi=(w_2\psi)\phi$ is a consequence of the relations $(p_im_i)\phi=(q_im_i)\phi$ of $R_3$.
Finally, since $w_1=(w_1\psi)\phi$ and $(w_2\psi)\phi=w_2$ are consequences of $R_1$ and $R_2$, 
we conclude that $w_1=w_2$ is a consequence of $R_1$, $R_2$ and $R_3$.
\end{proof}

\begin{remark}
The presentation from Theorem 6.1 has the disadvantage that it always has infinitely many relations if $M$ is an infinite monoid,
and neither the rewriting map nor the set $L(X, B)$ have been defined constructively.
However, the result does give a method for finding `nice' presentations for subacts in certain cases.
Given an $M$-act $A$ defined by a presentation $\langle X\,|\,R\rangle$ and a subact $B$ of $A$, this method consists of the following:
\begin{enumerate}
 \item finding a generating set $Y$ for $B$;
 \item finding a rewriting map $\phi : L(X, B)\to F_Y$;
 \item finding a set $S\subseteq F_Y\times F_Y$ of relations which hold in $B$ and imply the relations of the presentation, say $P$, given in Theorem 6.1.
\end{enumerate}
Using Tietze transformations, we can add $S$ to $P$ ($S$ must be a consequence of the relations of $P$ since these are defining relations for $B$) and then remove the remaining relations (since they are consequences of $S$).
Hence, by Proposition 3.15, we have that $B$ is defined by the presentation $\langle Y\,|\,S\rangle$.
\end{remark}
~\par
For the remainder of this section we shall be considering large subacts. 
Recall that a subact $B$ of an $M$-act is said to be {\em large} in $A$, and $A$ is said to be a {\em small extension} of $B$, if the set $A\setminus B$ is finite.
We shall investigate how similar an $M$-act $A$ and a large subact $B$ of $A$ are with regard to finite generation and finitely presentability.
We begin by considering finite generation.\par

\begin{lemma}
Let $M$ be a monoid, let $A$ be an $M$-act, and let $B$ be a large subact of $A$.  
If $B$ is finitely generated, then $A$ is finitely generated.
\end{lemma}

\begin{proof}
If $B$ is generated by a set $X$, then $A$ is generated by $X\cup(A\setminus B)$.
\end{proof}

In the following, we show that the converse to Lemma 6.3 does not hold for monoids in general, but it does however hold for all groups and all finitely generated monoids.

\begin{ex}
Let $M=X^{\ast}$ with $X$ infinite.  Let $I=X^{+}$, so $I$ is a large subact of the cyclic $M$-act $M$.
Clearly $X$ is a minimal generating set for $I$, so $I$ is not finitely generated.
\end{ex}

\begin{lemma}
Let $M$ be a group, let $A$ be an $M$-act, and let $B$ be a large subact of $A$.  
If $A$ is finitely generated, then $B$ is finitely generated.
\end{lemma}

\begin{proof}
Since $A$ is the disjoint union of its subacts $B$ and $A\setminus B$, 
it follows from Corollary 5.4 that $B$ is finitely generated if $A$ is finitely generated.
\end{proof}

The following result provides a generating set for a subact.

\begin{prop}
Let $M$ be a monoid generated by a set $Z$, let $A$ be an $M$-act generated by a set $X$, and let $B$ be a subact of $A$.  
Define a set $$S=\{am\in B : a\in A\setminus B, m\in Z\},$$ and let $Y=(X\cap B)\cup S$.
Then $B$ is generated by the set $Y.$
\end{prop}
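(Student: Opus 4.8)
The plan is to show that every element of $B$ can be written as $ym$ for some $y \in Y$ and $m \in M$. Take an arbitrary $b \in B$. Since $X$ generates $A$, we can write $b = xm$ for some $x \in X$ and $m \in M$, and since $Z$ generates the monoid $M$, we can write $m = s_1 s_2 \cdots s_k$ with each $s_j \in Z$ (allowing $k = 0$, in which case $m = 1$ and $b = x$, so $x = b \in X \cap B \subseteq Y$ and we are done). Now consider the chain of elements $a_0 = x$, $a_1 = xs_1$, $a_2 = xs_1s_2$, \dots, $a_k = xs_1\cdots s_k = b$, so that $a_j = a_{j-1}s_j$ for each $j$, and $a_k = b \in B$.

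The key observation is to look at where this chain first enters $B$. If $x \in B$, then $x \in X \cap B \subseteq Y$ and $b = xm \in \langle Y \rangle$. Otherwise $a_0 \notin B$, while $a_k = b \in B$, so there is a least index $j$ with $1 \le j \le k$ such that $a_j \in B$; then $a_{j-1} \in A \setminus B$ and $a_j = a_{j-1}s_j$ with $s_j \in Z$, so by definition $a_j \in S \subseteq Y$. Since $a_j \in B$ is a subact, we have $b = a_k = a_j(s_{j+1}\cdots s_k) \in \langle Y \rangle$. In either case $b \in \langle Y \rangle$, so $B = \langle Y \rangle$.

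I should also confirm that $Y \subseteq B$: the elements of $X \cap B$ obviously lie in $B$, and every element of $S$ lies in $B$ by construction, so $Y$ is a legitimate generating set contained in $B$ (this matters only for the sanity of the statement; the generation argument above is what carries the proof).

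I do not anticipate a serious obstacle here; the only subtlety is handling the edge cases cleanly — when $m = 1$ (the empty word in $Z$), and when $x$ already lies in $B$ — and making sure the "first time the chain enters $B$" argument is phrased so that the preceding element genuinely lies in $A \setminus B$. Once those corners are covered, the argument is a direct induction-free walk along the generating word for $m$.
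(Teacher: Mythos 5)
Your proof is correct and follows essentially the same argument as the paper: factor $m$ over the generators $Z$, walk along the resulting chain of elements, and use the first point of entry into $B$ to land in $S$. The only cosmetic difference is that you case-split on whether $x\in B$ and handle $k=0$ explicitly, whereas the paper cases on whether $b$ is expressible over $X\cap B$; the substance is identical.
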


\begin{proof}
Since $Y\subseteq B$ and $B$ is a subact of $A$, we have $\langle Y\rangle\subseteq B$.
Let $b\in B$.  If $b=xm$ for some $x\in X\cap B, m\in M$, then $b\in\langle Y\rangle$.
Otherwise, $b=xm_1\dots m_k$ for some $x\in X\setminus B, m_i\in Z$.
Let $s$ be minimal such that $xm_1\dots m_s\in B$, and let $a=xm_1\dots m_{s-1}$.  Then $a\in A\setminus B$ and $am_s\in B$, so $am_s\in S$.
Therefore, we have $$b=(am_s)m_{s+1}\dots m_k\in\langle S\rangle\subseteq\langle Y\rangle.$$  
Hence, we have that $B=\langle Y\rangle$.
\end{proof}

\begin{corollary}
Let $M$ be a finitely generated monoid, let $A$ be an $M$-act, and let $B$ be a large subact of $A.$  
If $A$ is finitely generated, then $B$ is finitely generated.
\end{corollary}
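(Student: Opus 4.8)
The plan is to deduce this immediately from Proposition 6.7 by making all the choices there finite. First I would invoke finite generation of $M$ to fix a \emph{finite} generating set $Z$ for $M$, and finite generation of $A$ to fix a \emph{finite} generating set $X$ for $A$. Since $B$ is large in $A$, the complement $A\setminus B$ is finite by definition.

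With these choices in hand, the set $S=\{am\in B : a\in A\setminus B, m\in Z\}$ appearing in Proposition 6.7 is a subset of the set of products $am$ with $a$ ranging over the finite set $A\setminus B$ and $m$ ranging over the finite set $Z$, so $|S|\leq |A\setminus B|\cdot|Z|<\infty$. Consequently $Y=(X\cap B)\cup S$ is finite, being a union of two finite sets. By Proposition 6.7 we have $B=\langle Y\rangle$, and therefore $B$ is finitely generated.

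There is no real obstacle here: the corollary is a direct specialisation of Proposition 6.7, the only point requiring a word being the finiteness of $S$, which follows from the crude bound $|S|\leq|A\setminus B|\cdot|Z|$ together with the hypotheses that $M$ is finitely generated and that $B$ is large.
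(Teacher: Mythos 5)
Your proof is correct and is exactly the argument the paper intends: the corollary is stated as an immediate consequence of the proposition giving the generating set $Y=(X\cap B)\cup S$, and your only task was to observe that $S$ and $X\cap B$ are finite under the stated hypotheses, which you do. (Note the result you invoke is Proposition 6.6 in the paper's numbering, not 6.7.)
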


We have shown that, for groups and finitely generated monoids $M,$ finite generation is inherited by both large subacts and small extensions.
We now turn our attention to finite presentability.
For a monoid $M$, there are two questions relating to large subacts that arise:
\begin{enumerate}
\item Is every small extension of every finitely presented $M$-act finitely presented?
\item Is every large subact of every finitely presented $M$-act finitely presented?
\end{enumerate}

We first show that the property \textit{every small extension of every finitely presented $M$-act is finitely presented} is equivalent to another monoid property. 
From this we immediately derive as a corollary that finitely generated monoids $M$ admit a positive answer to question (1).

\begin{prop}
The following are equivalent for a monoid $M$:
\begin{enumerate}
 \item every finite $M$-act is finitely presented;
 \item every small extension of every finitely presented $M$-act is finitely presented.
\end{enumerate}
\end{prop}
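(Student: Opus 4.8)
The plan is to prove the two implications separately; in each case the substantive work has already been done, and the argument reduces to choosing the right auxiliary act. For $(1)\Rightarrow(2)$: suppose every finite $M$-act is finitely presented, let $B$ be a finitely presented $M$-act, and let $A$ be a small extension of $B$, so that $A\setminus B$ is finite. The key observation is that the Rees quotient $A/B$ has underlying set $(A\setminus B)\cup\{0\}$, hence is finite, and is therefore finitely presented by hypothesis (1). Since $B$ is finitely presented by assumption and $A/B$ is finitely presented, Corollary 4.4 gives at once that $A$ is finitely presented. This direction is immediate once the finiteness of $A/B$ is noticed.

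For $(2)\Rightarrow(1)$: suppose every small extension of every finitely presented $M$-act is finitely presented, and let $A$ be an arbitrary finite $M$-act. The naive idea of realising $A$ itself as a small extension of a finitely presented subact does not work, since a finite $M$-act need not contain any finitely presented subact (for instance, over a free monoid of infinite rank the trivial act has no finitely presented subact, by Example 3.12). Instead I would pass to the disjoint union $A\sqcup M$, where the second summand is $M$ regarded as the free $M$-act on $\{1\}$. Then $M$ is a subact of $A\sqcup M$ whose complement is $A$, which is finite, so $A\sqcup M$ is a small extension of $M$; and $M$ is finitely presented as an $M$-act by Example 3.7. Hypothesis (2) therefore gives that $A\sqcup M$ is finitely presented, and since $A\sqcup M$ is a disjoint union of the $M$-acts $A$ and $M$, Corollary 5.9 forces both summands, in particular $A$, to be finitely presented.

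The only genuine subtlety is the choice of base act in the second implication: one must manufacture, from the given finite act $A$, a small extension whose base is finitely presented and whose complement is exactly $A$, and the disjoint union $A\sqcup M$ (indeed $A\sqcup F$ for any fixed finitely presented $M$-act $F$ would do) does precisely this. Beyond that point the proof is a direct appeal to Corollary 4.4 in the one direction and to Corollary 5.9 together with Example 3.7 in the other, so I do not anticipate any technical obstacles.
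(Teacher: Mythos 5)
Your proposal is correct and follows essentially the same route as the paper: the forward direction uses the finiteness of the Rees quotient $A/B$ together with Corollary 4.4, and the reverse direction realises $A$ as the complement of a finitely presented act in a disjoint union and applies Corollary 5.9. The only cosmetic difference is that you take the auxiliary act to be $M$ itself (via Example 3.7), whereas the paper simply chooses an arbitrary finitely presented $M$-act disjoint from $A$ --- as you yourself note, any such choice works.
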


\begin{proof}
$(1)\Rightarrow(2)$.  Let $A$ be a small extension of a finitely presented $M$-act $B$.
We have that $A/B$ is finite and hence finitely presented by assumption.
Since $B$ and $A/B$ are finitely presented, it follows that $A$ is finitely presented by Corollary 4.4.
\\$(2)\Rightarrow(1)$.  Let $A$ be a finite $M$-act.
Choose a finitely presented $M$-act $B$ disjoint from $A$.
We have that $A\cup B$ is a small extension of $B$, so it is finitely presented by assumption.
Hence, by Corollary 5.9, we have that $A$ is finitely presented.
\end{proof}

The above result together with Lemma 3.11 yields:

\begin{corollary}
Let $M$ be a finitely generated monoid.
Then every small extension of every finitely presented $M$-act is finitely presented.
\end{corollary}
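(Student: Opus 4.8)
The plan is to obtain this immediately from Proposition 6.10 combined with Lemma 3.11. First I would invoke Lemma 3.11: since $M$ is finitely generated, every finite $M$-act is finitely presented, so condition (1) of Proposition 6.10 is satisfied. Applying the implication $(1)\Rightarrow(2)$ of Proposition 6.10 then yields condition (2), which is exactly the statement that every small extension of every finitely presented $M$-act is finitely presented.

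Since both ingredients are already in place, there is no genuine obstacle here; the only thing to verify is that the hypotheses match up, and they do verbatim. If a more self-contained argument were desired, one could instead unwind the proof of $(1)\Rightarrow(2)$ of Proposition 6.10 directly: given a small extension $A$ of a finitely presented $M$-act $B$, the Rees quotient $A/B$ is a finite $M$-act, hence finitely presented by Lemma 3.11, and then Corollary 4.4 (which builds a finite presentation for $A$ from finite presentations for $B$ and $A/B$) shows that $A$ is finitely presented. Either route makes this a one-line corollary rather than a result requiring new work.
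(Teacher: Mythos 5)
Your argument is correct and is exactly the paper's intended derivation: the corollary follows immediately by combining the equivalence in Proposition 6.8 (which you cite as 6.10 --- a reference slip only) with Lemma 3.11, and your alternative unwinding via Corollary 4.4 is just the body of the $(1)\Rightarrow(2)$ direction of that proposition. No gap; nothing further is needed.
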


We now turn to consider which monoids $M$ give a positive answer to question (2); that is, every large subact of every finitely presented $M$-act is finitely presented.
We first present an example which reveals that there exist finitely generated monoids which do not possess this property,
and then we show that there exists a large class of monoids for which the property holds.

\begin{ex}
Let $M$ be the monoid defined by the presentation
$$\langle a, b\,|\,ab^ia=aba~(i\geq 2)\rangle_{\text{Mon}}.$$
Let $I=M\setminus\{1, a\}$, so $I$ is a large subact of the finitely presented $M$-act $M.$\par
Now $I=\langle b, a^2, ab\rangle$ is a disjoint union of $\langle b\rangle, \langle a^2\rangle$ and $\langle ab\rangle.$
Clearly $\langle b\rangle$ and $\langle a^2\rangle$ are free and hence finitely presented.
Letting $y=ab$, we have that $\langle y\rangle$ is defined by the presentation 
$$\langle y\,|\,y\cdot b^ia=y\cdot a~(i \in \mathbb{N})\rangle.$$
We saw in Example 5.14 that $\langle y\rangle$ is not finitely presented.
It hence follows from Corollary 5.9 that $I$ is not finitely presented.
\end{ex}

\begin{lemma}
Let $M$ be a group, let $A$ be an $M$-act and let $B$ be a large subact of $A$.  
If $A$ is finitely presented, then $B$ is finitely presented.
\end{lemma}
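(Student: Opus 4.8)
The plan is to exploit the fact that when $M$ is a group, largeness is a red herring: the complement $A\setminus B$ is automatically a subact, so $A$ decomposes as a \emph{disjoint} union of two subacts, and the disjoint-union results of Section~5 apply directly.

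First I would dispose of the trivial case $A=B$, where there is nothing to prove. Assuming $A\setminus B\neq\emptyset$, the key observation is that $A\setminus B$ is a subact of $A$: if $a\in A\setminus B$ and $m\in M$, then $am\notin B$, since otherwise $a=(am)m^{-1}\in B$, because $B$ is a subact and $M$ is a group. Hence $A\setminus B$ is a non-empty $M$-act and $A$ is the disjoint union of the subacts $B$ and $A\setminus B$. This is the same structural observation used in the proof of Lemma~6.5, now in the context of presentations rather than generating sets.

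It then remains only to invoke Corollary~5.10: since $A=B\cup(A\setminus B)$ is a disjoint union and $A$ is finitely presented, both $B$ and $A\setminus B$ are finitely presented, which in particular gives the claim. I do not anticipate any real obstacle here; the only point to notice is that the group hypothesis upgrades ``large subact'' to ``direct summand'', after which the statement is immediate from Section~5. (In fact the finiteness of $A\setminus B$ plays no role in obtaining finite presentability of $B$; it would only be needed if one additionally wanted $A\setminus B$ to be finitely presented, which here comes for free anyway.)
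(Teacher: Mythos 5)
Your proof is correct and takes essentially the same route as the paper, which likewise observes that $A$ is the disjoint union of its subacts $B$ and $A\setminus B$ and then applies the disjoint-union result for finite presentability (Corollary~5.9 in the paper's numbering, not 5.10). Your explicit verification that $A\setminus B$ is a subact via $a=(am)m^{-1}$, and your separate handling of the case $A=B$, are just details the paper leaves implicit.
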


\begin{proof}
Since $A$ is the disjoint union of its subacts $B$ and $A\setminus B$, 
it follows from Corollary 5.9 that $B$ is finitely presented if $A$ is finitely presented.
\end{proof}

\begin{defn}
A monoid $M$ is {\em right coherent} if every finitely generated subact of every finitely presented $M$-act is finitely presented.
\end{defn}

Examples of right coherent monoids include groups, Clifford monoids, semilattices, the bicyclic monoid, free commutative monoids, free monoids and the free left ample monoid; see \cite{Gould1,Gould2,Gould3,Gould4}.\par
Since for any finitely generated monoid $M,$ a large subact of a finitely generated $M$-act is finitely generated, we have the following result:

\begin{lemma}
Let $M$ be a finitely generated right coherent monoid, let $A$ be an $M$-act and let $B$ be a large subact of $A$.  
If $A$ is finitely presented, then $B$ is finitely presented.
\end{lemma}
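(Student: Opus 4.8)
The plan is to chain together two facts already established in this section, so that the argument reduces to a one-line deduction. First I would note that since $A$ is finitely presented, it is in particular finitely generated. Because $M$ is a finitely generated monoid, Corollary 6.8 applies to the large subact $B$ of $A$, yielding that $B$ is finitely generated. (This is exactly the observation flagged in the sentence preceding the statement: over a finitely generated monoid, finite generation passes to large subacts.)

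Next I would invoke right coherence. By Definition 6.14, $M$ being right coherent means precisely that every finitely generated subact of every finitely presented $M$-act is finitely presented. We now have that $B$ is a finitely generated subact of the finitely presented $M$-act $A$, so this definition applies directly and gives that $B$ is finitely presented, completing the proof.

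I do not anticipate any genuine obstacle here: the content has been front-loaded into Corollary 6.8 (which in turn rests on Proposition 6.7) and into the definition of right coherence. The only thing to be careful about is making explicit the implication ``finitely presented $\Rightarrow$ finitely generated'' for $A$, which is immediate from the definitions (a finite presentation has a finite generating set), so that the hypotheses of Corollary 6.8 are met before one appeals to right coherence.
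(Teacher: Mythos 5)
Your proof is correct and is exactly the argument the paper intends: the lemma is presented as an immediate consequence of the fact that, over a finitely generated monoid, a large subact of a finitely generated act is finitely generated (the corollary to Proposition 6.6), combined directly with the definition of right coherence. (The results you cite appear as Corollary 6.7 and Definition 6.12 in the paper rather than 6.8 and 6.14, but the content you invoke is the right one.)
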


Before stating our final result of this section, we first introduce a technical definition.\par
Let $M$ be a monoid with a presentation $\langle Z\,|\,P\rangle_{\text{Mon}}$, and let $A$ be an $M$-act with a presentation $\langle X\,|\,R\rangle.$
For a word $w$ in $Z^{\ast},$ let $\overline{w}$ denote the element of $M$ which $w$ represents.
We say an element $x\cdot w\in F_{X, Z^{\ast}}$ {\em represents} an element $a\in A$ if $x\cdot\overline{w}\in F_{X, M}$ represents $a\in A.$

\begin{thm}
Let $M$ be a finitely presented monoid, let $A$ be an $M$-act, and let $B$ be a large subact of $A$.  
If $A$ is finitely presented, then $B$ is finitely presented.
\end{thm}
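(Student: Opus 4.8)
The plan is to reduce to the already-established machinery via the Rees quotient $A/B$ and Corollary~4.4, which says that if $B$ and $A/B$ are finitely presented then so is $A$ — except here we have $A$ finitely presented and want $B$ finitely presented, so the relevant tool is instead Theorem~6.1 (the general subact presentation) combined with a finiteness argument using the fact that $M$ is finitely presented and $A \setminus B$ is finite. First I would fix a finite monoid presentation $\langle Z \mid P\rangle_{\mathrm{Mon}}$ for $M$ and a finite act presentation $\langle X \mid R\rangle$ for $A$. By Corollary~6.8 (finite generation is inherited by large subacts over finitely generated monoids), $B$ is finitely generated; more precisely, Proposition~6.7 gives an explicit finite generating set $Y = (X \cap B) \cup S$ where $S = \{am \in B : a \in A\setminus B,\ m \in Z\}$, which is finite since $A\setminus B$ and $Z$ are both finite. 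I would take $Y$ as the generating set for $B$ and invoke Theorem~6.1 to get the (infinite) presentation $\langle Y \mid R_1, R_2, R_3\rangle$ for $B$, where $R_1, R_2, R_3$ are built from the representation map $\psi$ and a rewriting map $\phi$.

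The heart of the argument is then to show that only finitely many of the relations in $R_1 \cup R_2 \cup R_3$ are actually needed, i.e.\ to exhibit a finite subset whose consequences include all of them, and then appeal to Corollary~3.17. The key structural observation is that, because $A \setminus B$ is finite, an element $x \cdot w \in F_{X, Z^\ast}$ representing an element of $B$ "enters $B$ after a bounded prefix": writing $w = z_1 z_2 \cdots z_\ell$ with $z_j \in Z$, there is a first index at which the represented element lands in $B$, and after that point we never leave $B$. The technical definition introduced just before the theorem statement — that $x \cdot w \in F_{X, Z^\ast}$ represents $a$ iff $x \cdot \overline{w}$ does — is exactly what is needed to lift the rewriting map $\phi$ to act on words in $Z^\ast$ rather than on elements of $M$, so that the rewriting of a given representative can be controlled by rewriting letter-by-letter. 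Using this, I would argue that the relations in $R_2$ of the form $(wm)\phi = (w\phi)m$ can all be deduced from the finitely many instances where $m = z \in Z$ is a single generator (a straightforward induction on the length of a word representing $m$), and similarly that $R_1$ is already finite (one relation per $y \in Y$, and $Y$ is finite). For $R_3$, the instances $(um)\phi = (vm)\phi$ for $(u,v) \in R$ and $um \in L(X,B)$ need to be reduced: here I would use that $R$ is finite and that, modulo the already-reduced $R_2$-relations, pushing the suffix $m$ through $\phi$ reduces $(um)\phi$ to $((u\phi')m)$-type expressions where $u\phi'$ ranges over a finite set determined by the finitely many "short" representatives that first enter $B$; combined with finiteness of $A\setminus B$, only finitely many genuinely distinct relations survive.

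The main obstacle I anticipate is making the "bounded prefix before entering $B$" idea precise enough to tame $R_3$: unlike $R_2$, an $R$-relation can be applied deep inside a long word, and one must argue that applying it and then rewriting into $F_Y$ gives the same result as first rewriting a bounded-length initial segment into $F_Y$ and then applying a \emph{finite} residual set of relations there. The clean way to handle this is to reformulate the whole presentation in terms of $F_{X, Z^\ast}$ using the definition preceding the theorem: set up a rewriting map on words in $Z^\ast$, observe that the congruence on $B$ is generated by (i) the finitely many relations coming from the finite monoid presentation $P$ applied while one is already inside $B$, (ii) the finitely many "first entry" relations recording $am_s = (am_s)$ for $a \in A\setminus B$, $m_s \in Z$, and (iii) the finite image of $R$. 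One then checks that every relation of the Theorem~6.1 presentation is a consequence of this finite set, and concludes by Corollary~3.17 that $B$ is finitely presented. A secondary, more bookkeeping-heavy obstacle is handling elements of $Y$ arising from $X \cap B$ versus those in $S$ uniformly, and ensuring the rewriting map is chosen compatibly with the normal-form structure so that the induction on word length actually closes; these are routine but need care.
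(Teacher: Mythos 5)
Your overall strategy is the same as the paper's: the finite generating set $Y=(X\cap B)\cup S$ from Proposition 6.6, the lifting of the rewriting map to $F_{X,Z^{\ast}}$ via the definition preceding the theorem, and the plan of exhibiting a finite relation set (the image of $R$ under the rewriting map together with relations extracted from the finite monoid presentation $P$) and checking that the relations $R_1$, $R_2$, $R_3$ of Theorem 6.1 all follow from it. However, two steps are not right as stated, and one of them conceals the crux of the argument. First, $R_2$ does not reduce to ``finitely many instances where $m=z\in Z$'': even with $m$ restricted to a single generator, the word $w$ still ranges over the infinite set $L(X,B)$, so no such restriction is finite. The paper instead shows directly that each relation $(wm)\phi=(w\phi)m$ is a consequence of a finite set $S_2$ described below.

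Second, and more seriously, your proposed finite set (i) --- relations from $P$ applied \emph{while one is already inside} $B$ --- is essentially vacuous. Once the lifted word $x\cdot m_1\cdots m_k$ has passed its first entry point into $B$, say at a generator $b\in S$, its image under the rewriting is $b\cdot\overline{m_{s+1}\cdots m_k}\in F_{Y,M}$, where the suffix is evaluated in $M$; applying a relation of $P$ within that suffix does not change the element of the free act at all, so no defining relations are needed there (this is Case 1 of the paper's claim). The relations that actually carry the argument are those arising when an application of a relation $(q,q^{\prime})\in P$ \emph{straddles} the first entry into $B$: the two rewritten words may then enter $B$ at \emph{different} generators $b,c\in S$, and one must impose $b\cdot\overline{w}=c\cdot\overline{z}$ for $b,c\in S$ and $w,z$ suffixes of the two sides of a relation in $P$, whenever this holds in $B$. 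This set (the paper's $S_2$) is finite because $S$ and $P$ are finite, and it is the engine behind the treatment of $R_2$, of $R_3$, and of the well-definedness of the whole rewriting; neither your item (i) nor your ``first entry'' relations (ii) covers it. With $S_2$ correctly identified, the remainder of your plan goes through as in the paper.
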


\begin{proof}
We shall prove this result using the method based on Theorem 6.1 and outlined in Remark 6.2.\par
Let $M$ be defined by the presentation $\langle Z\,|\,P\rangle_{\text{Mon}}$, where $Z$ and $P$ are finite.
Suppose $A$ is defined by the finite presentation $\langle X\,|\,R\rangle$.
We define the finite set $$S=\{am\in B : a\in A\setminus B, m\in Z\},$$ 
and let $Y=(X\cap B)\cup S$.
We have that $B=\langle Y\rangle$ by Proposition 6.6.\par
Let $W$ denote the set of elements of $F_{X, Z^{\ast}}$ which represent elements of $B$.
We define a map $$\theta : W\to F_{Y, M}$$ as follows.
For $u\in W,$ we have $u=x\cdot w$ for some $x\in X$ and $w\in Z^{\ast}.$
If $x\in B,$ let $u\theta=x\cdot\overline{w}.$ 
Suppose $x\in A\setminus B.$  We have $w=m_1\dots m_k$ with $m_i\in Z$.
Let $s$ be minimal such that $x\cdot m_1\dots m_s$ represents an element of $B$, say $b$, and let $u\theta=b\cdot\overline{m_{s+1}\dots m_k}$.\par
Note that $(uw)\theta\equiv(u\theta)\overline{w}$ for all $u\in W$ and $w\in Z^{\ast}.$\par  
Let $L(X, B)$ denote the set of elements of $F_{X, M}$ which represent elements of $B$.
For each $m\in M$, choose an element $w_m\in Z^{\ast}$ which represents $m$.
We now have a well-defined rewriting map
$$\phi : L(X, B)\to F_Y, x\cdot m\mapsto(x\cdot w_m)\theta.$$
Note that for any $x\in X\cap B$, we have $x\equiv x\phi$.
Now, for each $y\in S,$ we have $y=a_ym_y$ for some $a_y\in A\setminus B$ and $m_y\in Z$.
Choose $u_y\in F_X$ which represents $a_y$ in $A$, so $(u_ym_y)\phi=y$ holds in $B$.\par
We now define the following sets of relations:
\begin{align*}
S_1=\,&\{u\phi=v\phi : (u, v)\in R, u\in L(X, B)\};\\
S_2=\,&\{b\cdot\overline{w}=c\cdot\overline{z} : b, c\in S, w\text{ and }z\text{ are suffixes of }p\text{ and }q\text{ respectively}\\& \text{ for some } (p, q) \in P, b\cdot\overline{w}=c\cdot\overline{z} \text{ holds in } B\}.
\end{align*}
Since $R$, $S$ and $P$ are finite, we have that $S_1$ and $S_2$ are finite.\par 
We now make the following claim:
\begin{claim*}
Let $x\in X$ and $w, w^{\prime}\in Z^{\ast}$ such that $w=w^{\prime}$ holds in $M$ and $x\cdot w$ represents an element of $B.$
Then $(x\cdot w)\theta=(x\cdot w^{\prime})\theta$ is a consequence of $S_2$.
\end{claim*}
\begin{proof}
If $x\in B,$ then 
$$(x\cdot w)\theta\equiv x\cdot\overline{w}\equiv x\cdot\overline{w^{\prime}}\equiv(x\cdot w^{\prime})\theta.$$
Suppose now that $x\in A\setminus B$.  
Since $w=w^{\prime}$ is a consequence of $P,$ it is clearly sufficient to consider the case where $w^{\prime}$ is obtained from $w$ by a single application of a relation from $P,$ 
so let $w=pqr$ and $w^{\prime}=pq^{\prime}r$ where $p, r\in Z^{\ast}$ and $(q, q^{\prime})\in P.$
There are three cases.\par
\noindent{\em Case} 1: $x\cdot p$ represents an element of $B.$  Since $q=q^{\prime}$ in $M,$ we have 
$$(x\cdot w)\theta\equiv((x\cdot p)\theta)\overline{qr}\equiv((x\cdot p)\theta)\overline{q^{\prime}r}\equiv(x\cdot w^{\prime})\theta.$$
{\em Case} 2: $x\cdot pq$ represents an element of $A\setminus B.$  
Now $r=m_1\dots m_k$ where $m_i\in Z.$
Let $s$ be minimal such that $x\cdot pqm_1\dots m_s$ represents an element of $B$, say $b$.
Since $x\cdot pq$ and $x\cdot pq^{\prime}$ represent the same element of $A$, we have
$$(x\cdot w)\theta\equiv b\cdot\overline{m_{s+1}\dots m_k}\equiv(x\cdot w^{\prime})\theta.$$
{\em Case} 3: $x\cdot p$ represents an element of $A\setminus B$ and $x\cdot pq$ represents an element of $B$.
Now $q=m_1\dots m_k$ and $q^{\prime}=n_1\dots n_l$ where $m_i, n_i\in Z$.
Let $s$ be minimal such that $x\cdot pm_1\dots m_s$ represents an element of $B$, say $b$, 
and let $t$ be minimal such that $x\cdot pn_1\dots n_t$ represents an element of $B$, say $c$.
We have that
$$(x\cdot w)\theta\equiv(b\cdot\overline{m_{s+1}\dots m_k})\overline{r}=(c\cdot\overline{n_{t+1}\dots n_l})\overline{r}\equiv(x\cdot w^{\prime})\theta,$$
using an application of a relation from $S_2$.
\end{proof}
Returning to the proof of Theorem 6.14, we shall show that $B$ is defined by the finite presentation $\langle Y\,|\,S_1, S_2\rangle$.
We need to show that the relations $R_1$, $R_2$ and $R_3$ of the presentation for $B$ given in Theorem 6.1 are consequences of $S_1$ and $S_2$. 
That is, we show that for any $y\in S$, $w\in L(X, B)$ and $m\in M$,
and $(u, v)\in R$, $n\in M$ such that $un\in L(X, B)$,
the relations $y=(u_ym_y)\phi$, $(wm)\phi=(w\phi)m$ and $(un)\phi=(vn)\phi$ are consequences of $S_1$ and $S_2$.\par
Let $y\in S$.  We have that $u_y=x\cdot m$ for some $x\in X\setminus B$ and $m\in M$.
Since $w_mm_y=w_{mm_y}$ holds in $M$, we have that 
$$y\equiv(x\cdot w_mm_y)\theta=(x\cdot w_{mm_y})\theta\equiv(u_ym_y)\phi$$ 
is a consequence of $S_2$ by the above claim.\par
Now let $u=x\cdot n\in L(X, B)$ and $m\in M.$
We have that $(um)\phi\equiv(x\cdot w_{nm})\theta$ and $(u\phi)m\equiv(x\cdot w_nw_m)\theta$.
Since $w_{nm}=w_nw_m$ holds in $M,$ we have that $(um)\phi=(u\phi)m$ is a consequence of $S_2$ by the above claim.\par
Finally, let $(u, v)\in R$ and $n\in M$ such that $un\in L(X, B)$.
Suppose first that $u\in L(X, B)$.  We have that $(un)\phi=(u\phi)n$ and $(vn)\phi=(v\phi)n$ are consequences of $S_2$,
and we obtain $(v\phi)n$ from $(u\phi)n$ by an application of a relation from $S_1$.
Therefore, $(un)\phi=(vn)\phi$ is a consequence of $S_1$ and $S_2$.\par 
Suppose now that $u$ represents an element of $A\setminus B$.
We have that $u=x\cdot m$ and $v=x^{\prime}\cdot m^{\prime}$ for some $x, x^{\prime}\in X$ and $m, m^{\prime}\in M.$
Now $(un)\phi=(x\cdot w_mw_n)\theta$ and $(vn)\phi=(x^{\prime}\cdot w_{m^{\prime}}w_n)\theta$ are consequences of $S_2$ by the above claim.
We have that $w_n=m_1\dots m_k$ where $m_i\in Z.$
Let $s$ be minimal such that $x\cdot w_mm_1\dots m_s$ represents an element of $B$, say $b$.
Since $x\cdot w_m$ and $x^{\prime}\cdot w_{m^{\prime}}$ represent the same element of $A$, we have
$$(x\cdot w_mw_n)\theta\equiv b\cdot\overline{m_{s+1}\dots m_k}\equiv(x^{\prime}\cdot w_{m^{\prime}}w_n)\theta.$$
Therefore, we have that $(un)\phi=(vn)\phi$ is a consequence of $S_2$.
\end{proof}

\vspace{2em}

\end{document}